\definecolor{cyan}{cmyk}{1,0,0,0}
\definecolor{lightcyan}{cmyk}{0.5,0,0,0}
\definecolor{pastelcyan}{cmyk}{0.25,0,0,0}
\definecolor{magenta}{cmyk}{0,1,0,0}
\definecolor{yellow}{cmyk}{0,0,1,0}
\definecolor{lightyellow}{cmyk}{0,0,0.5,0}
\definecolor{pastelyellow}{cmyk}{0,0,0.25,0}
\definecolor{black}{cmyk}{0,0,0,1}
\definecolor{darkgray}{cmyk}{0,0,0,0.75}
\definecolor{gray}{cmyk}{0,0,0,0.5}
\definecolor{lightgray}{cmyk}{0,0,0,0.25}
\definecolor{white}{cmyk}{0,0,0,0}
\definecolor{red}{cmyk}{0,1,1,0}
\definecolor{orange}{cmyk}{0,0.5,1,0}
\definecolor{scarlet}{cmyk}{0,1,0.5,0}
\definecolor{brown}{cmyk}{0.5,0.75,1,0}
\definecolor{camel}{cmyk}{0.25,0.375,0.5,0}
\definecolor{cream}{cmyk}{0,0.2,0.3,0}
\definecolor{green}{cmyk}{1,0,1,0}
\definecolor{lightgreen}{cmyk}{0.5,0,0.5,0}
\definecolor{pastelgreen}{cmyk}{0.25,0,0.25,0}
\definecolor{mossgreen}{cmyk}{0.64,0.4,1,0}
\definecolor{yellowgreen}{cmyk}{0.5,0,1,0}
\definecolor{skyblue}{cmyk}{0.4,0.16,0,0}
\definecolor{royal}{cmyk}{1.0,0.5,0,0}
\definecolor{navyblue}{cmyk}{0.9,0.75,0.5,0}
\definecolor{lightnavy}{cmyk}{0.4,0.3,0.2,0}
\definecolor{blue}{cmyk}{1,1,0,0}
\definecolor{lightblue}{cmyk}{0.5,0.5,0,0}
\definecolor{lavender}{cmyk}{0.25,0.25,0,0}
\definecolor{violet}{cmyk}{0.75,1,0.25,0}
\definecolor{purple}{cmyk}{0.5,1,0.5,0}
\definecolor{pink}{cmyk}{0,0.5,0,0}
\definecolor{pastelpink}{cmyk}{0,0.25,0,0}
 \newtheorem{theo}{Theorem}[section]
 \newtheorem{lemm}[theo]{Lemma}
\newtheorem{defi}[theo]{Definition}
\newtheorem{prop}[theo]{Proposition}
\newcommand\CC{{\mathbb C}}
\newcommand\NN{{\mathbb N}}
\newcommand\RR{{{\mathbb R}}}
\def\SS {\mathbb{S}}
\def\la{\langle}
\def\ra{\rangle}
\let\ve=\varepsilon
\newcommand\cA{{\mathcal A}}
\newcommand\cC{{\mathcal C}}
\newcommand\cF{{\mathcal F}}
\newcommand\cS{{\mathcal S}}
\newcommand\cK{{\mathcal K}}
\newcommand\pa{\partial}
\def\vb{\mathbf b}
\def\ve{\mathbf e}
\date{4-August-2012}
\begin{document}
\title[Villani conjecture]
{Villani  conjecture 
on smoothing effect \\
of the homogeneous Boltzmann equation with \\ measure initial datum 
}
\author{Y. Morimoto }
\address{Y. Morimoto, Graduate School of Human and Environmental Studies,
Kyoto University,
Kyoto, 606-8501, Japan} \email{morimoto@math.h.kyoto-u.ac.jp}
\author{T. Yang}
\address{T. Yang, Department of mathematics, City University of Hong Kong,
Hong Kong, P. R. China
} \email{matyang@cityu.edu.hk}

\subjclass[2010]{primary 35Q20, 76P05, secondary  35H20, 82B40, 82C40, }

\keywords{Boltzmann equation, smoothing effect,
measure initial datum, coercivity estimate}

\date{}

\begin{abstract}
We justify the Villani conjecture on the smoothing effect for measure value solutions to the space homogeneous Boltzmann equation of Maxwellian type cross sections. This is the first rigorous proof of the smoothing effect
for any measure value initial data except the single Dirac mass, which gives the optimal description on the 
regularity of  solutions for positive time, caused by the singularity in the cross section. 
The main new ingredient in the proof is the introduction of a time degenerate coercivity estimate by using  the microlocal analysis.
\end{abstract}
\maketitle

\section{Introduction}\label{s1}

We consider the
spatially homogeneous Boltzmann equation 
\begin{equation}\label{bol}
\partial_t f(t,v) 
=Q(f, f)(t,v),
\end{equation}
where $f(t,v)$ is the density distribution of particles with 
velocity $v \in \RR^3$ at time $t$.
The right hand side of (\ref{bol}) is given by the
Boltzmann bilinear collision operator
\[
Q(g, f)(v)=\int_{\RR3}\int_{\mathbb S^{2}}B\left({v-v_*},\sigma
\right)
 \left\{g(v'_*) f(v')-g(v_*)f(v)\right\}d\sigma dv_*\,,
\]
where for $\sigma \in \SS^2$
$$
v'=\frac{v+v_*}{2}+\frac{|v-v_*|}{2}\sigma,\,\,\, v'_*
=\frac{v+v_*}{2}-\frac{|v-v_*|}{2}\sigma,\,
$$
which follow from the conservation of momentum and energy,
\[ v' + v_*' = v+ v_*, \enskip |v'|^2 + |v_*'|^2 = |v|^2 + |v_*|^2.
\]
The equation \eqref{bol} is supplemented with a non-negative initial datum
\begin{equation}\label{initial}
f(0,v) = f_0(v),
\end{equation}
which is the density of probability distribution
(more generally a probability measure).

The non-negative cross section
 $B(z, \sigma)$ depends only on $|z|$ and the scalar product
$\frac{z}{|z|}\,\cdot\, \sigma$. 
For physical models, 
it usually takes the form
\begin{equation*}
B(|v-v_*|, \cos \theta)=\Phi (|v-v_*|) b(\cos \theta),\,\,\,\,\,
\cos \theta=\frac{v-v_*}{|v-v_*|} \, \cdot\,\sigma\, , \,\,\,
0\leq\theta\leq\frac{\pi}{2},
\end{equation*}
where
\begin{align}\label{1.2-0}
&\Phi(|z|)=\Phi_\gamma(|z|)= |z|^{\gamma}, \enskip \mbox{for some $\gamma>-3$}, \\
& b(\cos \theta)\theta^{2+2s}\ \rightarrow K\ \
 \mbox{when} \ \ \theta\rightarrow 0+,  \enskip 
\mbox{for $0<s<1$ and $K>0$. }\label{1.2}
\end{align}
ln fact, if the inter-particle potential satisfies the inverse power law 
$U(\rho)=\rho^{-(q-1)}$, $q >2$, where $\rho$ denotes the distance between two interacting particles, then
$s$ and $\gamma$ are given by
\[s = 1/(q-1) \,  <1\, , \enskip \enskip \gamma = 1 -4s = 1-4/(q-1) \,\, >-3\,.
\]
For this physical model, we have $\gamma =0$ if $s=1/4$, which is called the 
Maxwellian molecule. 
Inspired by  this case, we consider the Maxwellian molecule type cross section
when  
\[
\gamma = 0,\enskip 0<s<1\,.
\]


The angle $\theta$ is the deviation angle, i.e., the angle between 
pre- and post- collisional velocities. The range of $\theta$ is  in an interval $[0,\pi]$, but as in \cite{villani2} it is customary  to 
restrict it to $[0,\pi/2]$, by replacing $b(\cos\theta)$
 by its ``symmetrized'' version  
\[
[ b(\cos \theta)+b(\cos (\pi-\theta))]{\bf 1}_{0\le \theta\le \pi/2},
\]
which is possible due to the invariance of the product $f(v')f(v'_*)$ in the collision operator $Q(f,f)$ 
under the change of variables $\sigma \to -\sigma$.
 It should be noted that $b(\cos \theta)$ has the integrable singularity, that is,
\[
\int_{\SS^2} b\Big(\frac{v-v_*}{|v-v_*|}\cdot \sigma\Big) d\sigma = 2\pi \int_0^{\pi/2} b(\cos \theta) \sin \theta d \theta = \infty\,.
\]
The case where $0<s<1/2$, that is, $\int_0^{\pi/2} \theta b(\cos \theta) \sin \theta d \theta < \infty$ is called the mild singularity, and 
another case $1/2 \le s <1$ is called the strong singularity.
This kind of  singularity leads to the gain of regularity in the solution.

The study on the homogeneous Boltzmann equation has a very long history, cf.
\cite{carleman, arkeryd} and the references in recent work \cite{lu-mouhot}. 
In particular,
the smoothing effect of (weak) solutions to the Cauchy problem for the non cutoff
homogeneous Boltzmann equation has been studied by
many authors in \cite{desv-wen1,A-Sa,A-Sa2,MUXY-DCDS,HMUY,AMUXY-KJM,H-C}, including 
Gevrey smoothing effect in \cite{MU}.  
However, the problem for  measure initial data has been studied only in 
\cite{morimoto-12}, when it consists of a sum  of four Dirac masses.

On the other hand, Villani conjecture\cite{villani3} is to show that 
the smoothing effect for weak measurable solutions holds for any measure 
initial data except a single Dirac mass. The purpose of this paper
is to justify this conjecture, which is optimal in the sense that
a single Dirac mass is a stationary
solution of the Boltzmann equation.
 
Let us now introduce some notations for function spaces and recall some
related works on the existence and uniqueness. 
For every $0\le \alpha < \infty$, we denote by $P_\alpha(\RR^d)$ the class of all probability measure
$F$ on $\RR^d$, $d \ge 1$, such that
\[
\int_{\RR^d} |v|^\alpha dF(v) < \infty\,.
\]
Concerning the Cauchy problem 
for the homogeneous Boltzmann equation of the Maxwellian molecule type cross section,
Tanaka \cite{tanaka78} in 1978 proved  the existence and the uniqueness of the solution in the space $P_2(\RR^d)$ by using probability theory. 
The proof of this result was simplified and generalized in  \cite{PT, toscani-villani}.

The existence of solution with bounded energy was extended 
in  \cite{Cannone-Karch} to the initial datum as
a probability measure with infinite energy. Precisely, 
following \cite{Cannone-Karch}, introduce 

\begin{defi}\label{cha-def}
A function $\psi : \RR^3 \rightarrow \CC$ is called a characteristic function
if there is a probability measure $\Psi$ ( i.e., a positive Borel measure with $\int_{\RR^3} d \Psi(v) =1$)
such that the identity $\psi(\xi) = \int_{\RR^3} e^{-iv\cdot \xi}d \Psi(v) $
holds.
We denote the set of all characteristic functions by $\cK$.
\end{defi}
Inspired by \cite{toscani-villani}, 
a subspace  $\cK^\alpha$ for $\alpha \ge 0$ was defined in \cite{Cannone-Karch}
as follows:
\begin{align}\label{K-al}
\cK^\alpha =\{ \varphi \in \cK\,;\, \|\varphi - 1\|_{\alpha} < \infty\}\,,
\end{align}
where 
\begin{align}\label{dis-norm}
\|\varphi - 1\|_{\alpha} = \sup_{\xi \in \RR^3} \frac{|\varphi(\xi) -1|}{|\xi|^\alpha}.
\end{align}
The space $\cK^\alpha $ endowed with the distance 
\begin{align}\label{distance}
\|\varphi - \tilde \varphi\|_{\alpha} = \sup_{\xi \in \RR^3} \frac{|\varphi(\xi) -
\tilde \varphi(\xi) |}{|\xi|^\alpha}
\end{align}
is a complete metric space (see Proposition 3.10 of \cite{Cannone-Karch}).
It follows that $\cK^\alpha =\{1\}$ for all $\alpha >2$ and the embeddings
(Lemma 3.12 of \cite{Cannone-Karch}) hold, that is, 
\[
\{1\} \subset \cK^\alpha \subset \cK^\beta \subset \cK^0 =\cK \enskip \enskip
\mbox{for all $2\ge \alpha \ge \beta \ge 0$}\,\,.
\]
The defintion of the space $\cK^\alpha$ is natural because we have the following lemma
(Lemma 3.15 of \cite{Cannone-Karch}).
\begin{lemm}\label{measure-positive}
Let $\Psi$ be a probability measure on $\RR^3$ such that
\begin{align}\label{moment-assumption}
\begin{array}{l} 
\displaystyle \exists \alpha \in (0, 2] ;\, \int |v|^\alpha d\Psi(v) <\infty \,, \\
\displaystyle \mbox{and  moreover,} \enskip
\int v_j d \Psi(v) =0\,, j=1,2,3, \enskip \mbox{when $\alpha >1$}.
\end{array}
\end{align}
Then the Fourier transform of $\Psi$, that is, $\psi(\xi) = \int e^{-iv\cdot \xi} d \Psi(v)$ belongs to 
$\cK^\alpha$.
\end{lemm}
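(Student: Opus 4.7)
The plan is to estimate $|\psi(\xi) - 1|$ directly from the integral representation of $\psi$, splitting into the two ranges $\alpha \in (0,1]$ and $\alpha \in (1,2]$. In both cases the goal is to bound $|\psi(\xi)-1|$ by $C|\xi|^\alpha \int |v|^\alpha d\Psi(v)$ pointwise in $\xi$, which by (\ref{dis-norm}) gives exactly $\psi \in \cK^\alpha$ since $\psi \in \cK$ is automatic from Definition \ref{cha-def}.

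For $0 < \alpha \le 1$, I would write
\[
\psi(\xi) - 1 = \int_{\RR^3} \bigl(e^{-iv\cdot \xi} - 1\bigr)\, d\Psi(v),
\]
and use the elementary inequality $|e^{ix}-1| \le 2|x|^\alpha$ for all $x \in \RR$ (split into $|x|\le 1$, where $|e^{ix}-1|\le |x|\le |x|^\alpha$, and $|x|>1$, where $|e^{ix}-1|\le 2\le 2|x|^\alpha$). This yields
\[
|\psi(\xi)-1| \le 2 \int |v\cdot \xi|^\alpha d\Psi(v) \le 2 |\xi|^\alpha \int |v|^\alpha d\Psi(v),
\]
which is finite by the assumption (\ref{moment-assumption}).

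For $1 < \alpha \le 2$, the first-moment vanishing hypothesis lets me subtract the linear term:
\[
\psi(\xi)-1 = \int \bigl(e^{-iv\cdot\xi} - 1 + iv\cdot\xi\bigr) d\Psi(v),
\]
since $\int v_j\, d\Psi(v)=0$. Then I would use the inequality $|e^{ix}-1-ix|\le C_\alpha |x|^\alpha$ on all of $\RR$, proved again by splitting at $|x|=1$: for $|x|\le 1$ the Taylor bound gives $|e^{ix}-1-ix| \le x^2/2 \le |x|^\alpha/2$, and for $|x|>1$ the triangle inequality gives $|e^{ix}-1-ix|\le 2+|x|\le 3|x|\le 3|x|^\alpha$. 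Integrating yields the same kind of bound $|\psi(\xi)-1|\le C_\alpha |\xi|^\alpha \int |v|^\alpha d\Psi(v) < \infty$.

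There is no real obstacle; the only subtlety is recognizing that the first-moment assumption for $\alpha>1$ is exactly what is needed to cancel the linear term in the Taylor expansion of $e^{-iv\cdot\xi}$, since otherwise the integrand would be of order $|v\cdot\xi|$ for large $|v|$ and would not produce the required $|\xi|^\alpha$ homogeneity. In both cases the supremum in (\ref{dis-norm}) is controlled by the $\alpha$-moment of $\Psi$, which is a constant in $\xi$, so $\|\psi-1\|_\alpha<\infty$ and $\psi\in\cK^\alpha$.
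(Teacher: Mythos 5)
Your proof is correct. The paper itself does not prove this lemma but simply quotes it as Lemma 3.15 of the Cannone--Karch reference, and your argument --- Taylor expansion of $e^{-iv\cdot\xi}$ split at $|v\cdot\xi|=1$, with the vanishing first moment used to cancel the linear term when $\alpha>1$ --- is exactly the standard proof of that cited result.
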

The inverse of the lemma does not hold, in fact, the space $\cK^\alpha$ is bigger than the set of the Fourier
transform of $P_\alpha$ 
 (Remark 3.16 of \cite{Cannone-Karch}).
So we introduce 
$\tilde P_\alpha = \cF^{-1}(\cK^\alpha)$ endowed also with the distance \eqref{distance}.
The existence and the uniqueness of the solution in the space $\tilde P_\alpha$
was proved in \cite{Cannone-Karch} for the mild singularity,
and has been recently improved in \cite{morimoto-12} for the strong singularity.
Namely, if the cross section $b(\cos \theta)$ satisfies 
\eqref{1.2-0} with $0<s<1$ and if $2s < \alpha \le 2$, then there exists
a unique solution to the Cauchy problem  \eqref{bol}-\eqref{initial} 
in the space $C([0,\infty), \tilde P_\alpha)$ for 
any initial datum in $\tilde P_\alpha$ (see Theorem \ref{remark-ck}
in the Appendix).

We are now ready to state the main results of this paper.

\begin{theo}\label{main-thm} 
Let $b(\cos \theta)$ satisfy \eqref{1.2} with $0<s<1$
and let $\alpha \in (2s, 2]$.
If $F_0 \in \tilde P_\alpha(\RR^3)$ is not a single Dirac mass and  $f(t,v)$ is a unique solution 
in $C([0,\infty), \tilde P_\alpha)$ to the Cauchy problem \eqref{bol}-\eqref{initial},
then there exists a $T >0$ such that $f(t, \cdot ) \in H^\infty(\RR^3)$ for any $0 <t \le T$. 
Moreover, $T=\infty$ when   $F_0 \in P_2(\RR^3)$ for $0<s<1$,
and when $F_0 \in P_1(\RR^3)$ for $0<s<\frac 12$.
\end{theo}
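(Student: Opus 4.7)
The plan is to pass to the Fourier side via the Bobylev identity for Maxwellian molecules,
\[
\widehat{Q(f,f)}(\xi)\;=\;\int_{\SS^2} b\!\left(\tfrac{\xi}{|\xi|}\cdot\sigma\right)
\bigl[\hat f(\xi^+)\hat f(\xi^-)-\hat f(\xi)\bigr]\,d\sigma,\qquad
\xi^\pm=\tfrac{\xi\pm|\xi|\sigma}{2},
\]
using $\hat f(0)=1$. The proof then splits into three stages: (i) the ``time-degenerate'' coercivity estimate announced in the abstract, (ii) a weighted energy estimate that bootstraps arbitrarily high Sobolev regularity, and (iii) globalisation in $t$ under the moment hypotheses.

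The heart of the argument is stage (i). For measure data the standard non-cutoff coercivity $\langle -Q(f,f),f\rangle\gtrsim \|f\|_{H^s}^2$ is unavailable, so I would introduce a ``non-Dirac defect'' functional $\delta(F)\ge 0$ on $\tilde P_\alpha$ with the properties that (a) $\delta(F)=0$ if and only if $F$ is a single Dirac mass, and (b) $\delta$ is lower semicontinuous for the metric \eqref{distance}. After the splitting
\[
\hat f(\xi^+)\hat f(\xi^-)-\hat f(\xi)\;=\;\hat f(\xi^+)\bigl[\hat f(\xi^-)-1\bigr]+\bigl[\hat f(\xi^+)-\hat f(\xi)\bigr],
\]
the first bracket is controlled by $|\xi^-|^\alpha$ thanks to $f(t)\in\tilde P_\alpha\subset\cK^\alpha$, while the second is the standard grazing-collision increment. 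Directional/microlocal analysis of the singular angular kernel on the unit sphere, carried out frequency-by-frequency around each $\xi\in\RR^3$, should then yield
\[
\mathrm{Re}\bigl[\widehat{Q(f,f)}(t,\xi)\,\overline{\hat f(t,\xi)}\bigr]\;\le\; -c(t)|\xi|^{2s}|\hat f(t,\xi)|^2+R(t,\xi),
\]
with $c(t)\gtrsim \delta(f(t))$ and a remainder $R$ of lower order in $|\xi|$. Continuity of $t\mapsto f(t,\cdot)$ in $\tilde P_\alpha$, combined with the hypothesis $\delta(F_0)>0$, gives $c(t)\ge c_0>0$ on some interval $(0,T]$.

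Stage (ii) is then standard: multiply the Fourier-side equation by $|\xi|^{2k}\overline{\hat f(t,\xi)}$ (with a mollifier to keep everything finite), integrate in $\xi$, and absorb the top-order term using the coercivity. Starting from the trivial bound $|\hat f(t,\xi)|\le 1$, an induction whose exponent grows by $2s$ at each step yields $f(t,\cdot)\in H^k(\RR^3)$ for every $k\in\NN$ and every $0<t\le T$, hence $f(t,\cdot)\in H^\infty$. For stage (iii), if $F_0\in P_2(\RR^3)$ then conservation of momentum and energy pin the second-order Taylor coefficients of $\hat f(t,\cdot)$ at the origin, so $f(t,\cdot)$ can never collapse to a Dirac and $c(t)\ge c_*>0$ uniformly in $t$, giving $T=\infty$. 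In the mild-singularity regime $0<s<1/2$ one may take $\alpha=1$ in Theorem~\ref{main-thm}, and conservation of momentum alone, together with the integrability $\int_0^{\pi/2}\theta\, b(\cos\theta)\sin\theta\,d\theta<\infty$, is enough to propagate the non-Dirac property globally.

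The principal obstacle is stage (i): producing a quantitative, directionally-sensitive lower bound $c(t)\gtrsim \delta(f(t))$ that correctly identifies single Dirac masses as the sole obstruction to coercivity and is stable under the weak $\tilde P_\alpha$ topology. The microlocal input is essential because all existing coercivity proofs rely on some pointwise non-degeneracy of $f$ (e.g.\ $L\log L$ bounds), which is simply unavailable once $f$ is only a probability measure; the novelty is to extract the dissipation directly from the low-frequency anisotropy of $\hat f(t,\cdot)$ and transport it to arbitrarily high frequencies.
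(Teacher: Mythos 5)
Your overall skeleton (Bobylev's formula, a coercivity estimate, a weighted energy estimate, then globalisation) matches the paper's, but stage (i) contains a genuine gap that is precisely the crux of the problem. You posit a lower semicontinuous defect functional $\delta$ with $\delta(F)=0$ iff $F$ is a Dirac mass, and conclude that $c(t)\geq c_0>0$ near $t=0$, i.e.\ a coercivity of full order $|\xi|^{2s}$ with a constant that does not degenerate as $t\to 0$. This is false when $F_0$ is concentrated on a straight line without being a Dirac mass. Take $F_0=\tfrac12(\delta_{a\ve_3}+\delta_{-a\ve_3})$, so $\psi_0(\xi)=\cos(a\xi_3)$ and $|\psi_0(\xi^-)|=1$ on the whole plane $\{\xi^-\cdot\ve_3=0\}$. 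For $\xi$ parallel to $\ve_3$ and $|\xi|$ large, the angles $\theta$ for which $|\xi^-|=O(1)$ give $\xi^-\cdot\ve_3=|\xi|\sin^2(\theta/2)=O(|\xi|^{-1})$, so $1-|\psi_0(\xi^-)|=O(|\xi|^{-2})$ there, and the angular integral $\int_{\SS^2}b\,(1-|\psi_0(\xi^-)|)\,d\sigma$ only produces $|\xi_3|^{s}$ in that direction rather than $|\xi|^{2s}$ --- this is exactly the paper's remark \eqref{usual-deg}, together with the observation that such an anisotropic bound is \emph{not} sufficient and is \emph{not} stable under replacing $\psi_0$ by $\psi(t,\cdot)$. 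Consequently no functional with your properties (a)--(b) can yield $c(0+)>0$. The missing idea is the one announced in the title of Lemma \ref{time-deg-thm}: for $\xi^-$ nearly orthogonal to the line of concentration one must differentiate in time, show $(\pa_t|\psi|^2)(0,\xi^-)\leq -c_1<0$ on the relevant annulus (using \eqref{key} and the continuity of $\pa_t\psi$), and deduce $1-|\psi(t,\xi^-)|\geq \tfrac{c_1}{4}t$; the coercivity constant is genuinely of order $t$. Your stage (ii) then also needs modification: the paper absorbs this degeneracy with the time-dependent weight $\la\xi\ra^{Nt^2-4}$, whose logarithmic time-derivative $\sim Nt\log\la\xi\ra$ is dominated by the degenerate gain $t|\xi|^{2s}$; a fixed-exponent induction with a $t$-independent coercivity constant does not close down to $t=0$.

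Stage (iii) is also insufficient as stated. Conservation of energy and momentum does not ``pin'' the coercivity constant: the measures $F_n=(1-\tfrac1n)\delta_0+\tfrac1n\delta_{\sqrt n\,\ve_1}$ all have unit variance yet $\widehat{F_n}\to 1$ pointwise, so fixed second moments prevent collapse to a Dirac without giving any uniform lower bound on $1-|\psi|$. (Moreover the full pressure tensor $\int v_iv_j\,dF_t$ is not conserved, only its trace.) The paper's route is different: once $f(T)\in H^\infty\cap P_2$ one has $f(T)\in L^1_2\cap L\log L$, Villani's theorem propagates \eqref{uniform} uniformly for $t\geq T$, and Lemma 3 of \cite{ADVW} then gives $1-|\psi(t,\xi)|\geq\kappa\min(1,|\xi|^2)$ with $\kappa$ independent of $t$, which is the non-degenerate coercivity needed for all later times; the $P_1$, $0<s<1/2$ case runs the same way through Proposition \ref{entropy-finite}. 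You should incorporate this entropy-based mechanism rather than relying on moment conservation alone.
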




\begin{lemm}\label{time-deg-thm}
Let $F_0 \in \tilde P_\alpha(\RR^3)$ and $f(t,v) \in 
C([0,\infty), \tilde P_\alpha)$ be the same as in Theorem \ref{main-thm}.
If $\psi(t,\xi)$ and $\psi_0(\xi)$ are Fourier transforms of $f(t,v)$ and $F_0$, respectively, 
then there exist $T >0$ and $C >0$, such that for $t \in [0, T]$ we have
\begin{align}\label{time-deg-coer}
t \int_{\RR^3} \langle \xi \rangle^{2s}|h(\xi)|^2 d \xi &\le  C \Big(\int_{\RR^3} \Big(\int_{\SS^2}b\Big(\frac{\xi}{|\xi|}\cdot \sigma\Big) (1 - |\psi(t,\xi^-)| ) d\sigma \Big)
|h(\xi)|^2 d \xi  \\&
\qquad \qquad \qquad +   \int_{\RR^3} |h(\xi)|^2 d \xi \Big ), \enskip \, \enskip \mbox{for} \enskip \forall h \in L^2_{s}, \notag
\end{align}
where $\xi^- = (\xi - |\xi|\sigma)/2$.
\end{lemm}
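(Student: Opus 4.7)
The plan is to reduce the $L^2$-coercivity \eqref{time-deg-coer} to the pointwise Fourier estimate
$$K(t,\xi) := \int_{\SS^2}b\Bigl(\frac{\xi}{|\xi|}\cdot\sigma\Bigr)\bigl(1-|\psi(t,\xi^-)|\bigr)\,d\sigma \;\ge\; c\,t\,\langle\xi\rangle^{2s}-C',\qquad t\in[0,T],\ \xi\in\RR^3,$$
which implies the lemma upon multiplying by $|h(\xi)|^2$ and integrating, the additive constant $C'$ being absorbed into $C\int|h|^2\,d\xi$. The factor $t$ reflects the fact that at $t=0$ the left-hand side vanishes, whereas for $t>0$ the collision operator gradually spreads $f$; the real content lies in the behaviour of $K(t,\xi)$ as $|\xi|\to\infty$, since for bounded $|\xi|$ the $L^2$ norm already controls the left-hand side.

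First, I would pass to the symmetrized probability measure $G(t,\cdot):=f(t,\cdot)*\check f(t,\cdot)$ (with $\check f(v)=f(-v)$), whose Fourier transform equals $|\psi(t,\cdot)|^2$. Using the elementary inequality $1-|\psi|\ge\tfrac12(1-|\psi|^2)$, the task reduces to bounding $\int_{\SS^2}b(1-|\psi(t,\xi^-)|^2)\,d\sigma$ from below, and the representation
$$1-|\psi(t,\eta)|^2 = \int_{\RR^3}\bigl(1-\cos(v\cdot\eta)\bigr)\,dG(t,v)$$
makes the analysis more concrete. Since $F_0$ is not a single Dirac mass, $G_0:=G(0,\cdot)$ is a probability measure that is not concentrated at the origin, which is the quantitative hypothesis driving the estimate.

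The heart of the proof is then a pointwise time-degenerate bound of the form
$$1-|\psi(t,\eta)|^2 \;\ge\; c\,t\,|\eta|^{\alpha_0}, \qquad |\eta|\le R,\ t\in[0,T],$$
for some exponent $\alpha_0\in(2s,\alpha]$ and some $R>0$. Starting from Bobylev's identity $\partial_t\psi(t,\xi)=\int_{\SS^2}b[\psi(t,\xi^+)\psi(t,\xi^-)-\psi(t,\xi)]\,d\sigma$, I would compute $\partial_t|\psi|^2$, isolate its non-positive part, and integrate from $0$ to $t$ so as to bootstrap the initial non-triviality of $G_0$ into the desired linear-in-$t$ lower bound. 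The obstruction is that $F_0$ may be degenerate along certain directions (e.g.\ concentrated on a line or a plane), so the bound must be established uniformly in $\eta/|\eta|$; this is where the microlocal analysis announced in the abstract enters. I would decompose $\SS^2$ into cones according to whether $G_0$ already has nontrivial spread in that direction, and in the exceptional cones use the angular singularity of $b$ in \eqref{1.2} to show that iterated grazing collisions generate positive-rate spreading even along initially degenerate directions.

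With the pointwise bound in hand, the estimate $K(t,\xi)\ge ct\langle\xi\rangle^{2s}-C'$ follows from the classical small-angle computation. For large $|\xi|$, using $|\xi^-|=|\xi|\sin(\theta/2)\sim|\xi|\theta/2$ and $b(\cos\theta)\sin\theta\sim\theta^{-1-2s}$ as $\theta\to 0$, the main contribution comes from the region $\theta\lesssim1/|\xi|$:
$$\int_0^{c/|\xi|}\theta^{-1-2s}\cdot ct\,(|\xi|\theta)^{\alpha_0}\, d\theta \;\sim\; t\,|\xi|^{\alpha_0}\cdot|\xi|^{2s-\alpha_0} = t\,|\xi|^{2s},$$
where the $\theta$-integral converges at $0$ precisely because $\alpha_0>2s$. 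The main obstacle in the whole argument is therefore the previous step, namely producing the pointwise lower bound on $1-|\psi(t,\eta)|^2$ uniformly in direction despite the possible initial degeneracies of $F_0$, and it is precisely for this step that the microlocal cone decomposition and the non-cutoff angular singularity are essential.
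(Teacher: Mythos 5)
Your overall architecture matches the paper's: reduce to a pointwise lower bound on $1-|\psi(t,\xi^-)|$, use $1-|\psi|\ge\tfrac12(1-|\psi|^2)$, get the factor $t$ from the sign of $\partial_t|\psi|^2$ at $t=0$ via Bobylev's formula, split frequency space into cones according to the degeneracy of $F_0$, and finish with the small-angle integration of $\theta^{-1-2s}$. But the proposal stops exactly where the lemma's content begins: the ``heart of the proof,'' namely the claim $1-|\psi(t,\eta)|^2\ge c\,t\,|\eta|^{\alpha_0}$ uniformly in direction, is asserted rather than proved, and your only indication of how to obtain it in the degenerate directions is the phrase ``iterated grazing collisions generate positive-rate spreading.'' The entire Section 2 of the paper is devoted to making precisely this step quantitative: (i) when $F_0$ is not concentrated on a line, one exhibits three non-collinear mass points and shows $1-|\psi_0(\xi^-)|\ge\kappa_0$ on a thickened circle in the plane they span, so that by continuity the bound persists for $t\in[0,T]$ \emph{without} any factor of $t$; (ii) when $F_0$ is concentrated on a line, one first proves $|\psi_0|\le 1-\kappa$ on a slab $a_1\le|\xi_3|\le a_2$, then computes $(\partial_t|\psi|^2)(0,\eta)\le -c_1<0$ for $\eta$ in an annulus orthogonal to the line (a single application of the collision operator suffices — no iteration — because $\eta^-$ acquires a component along the line and lands in the slab), and only the cone of $\xi$ nearly parallel to the line produces the time-degenerate bound $\ge\tfrac{c_1}{4}t$. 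None of this geometry is supplied in your proposal, so the key estimate remains a conjecture within your argument.

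There is also a concrete error in the form of the claimed pointwise bound. You assert $1-|\psi(t,\eta)|^2\ge c\,t\,|\eta|^{\alpha_0}$ for \emph{all} $|\eta|\le R$ with some $\alpha_0\in(2s,\alpha]$. If $F_0\in P_2$ (the case needed for the global statement of Theorem \ref{main-thm}) and $\alpha_0<2$, this is false near $\eta=0$: since $1-|\psi(t,\eta)|^2=\int(1-\cos(v\cdot\eta))\,dG_t(v)\le\tfrac12|\eta|^2\int|v|^2dG_t(v)$, the left side is $O(|\eta|^2)$ while your lower bound is of strictly larger order as $\eta\to0$ for fixed $t>0$. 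The fix — and what the paper actually does — is to prove the lower bound only on an annulus $|\eta|\in[\lambda-\delta,\lambda+\delta]$ (with a constant, not a power of $|\eta|$), which is all your final integration needs, since the integral $\int\theta^{\alpha_0-1-2s}\,d\theta$ is dominated by its upper endpoint and one may restrict to $\theta\in[2\sin^{-1}((\lambda-\delta)/|\xi|),\,2\sin^{-1}((\lambda+\delta)/|\xi|)]$. A further small inaccuracy: a measure supported on a plane but not on a line falls into the \emph{non}-degenerate case (three non-collinear points exist), so the only genuinely time-degenerate configuration is concentration on a line; your cone decomposition should be organized around that dichotomy.
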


With Lemma \ref{time-deg-thm}, the proof of Theorem \ref{main-thm} can be given 
as follows.

\begin{proof}[Proof of Theorem \ref{main-thm}] 
It follows from the Bobylev formula that the Cauchy problem \eqref{bol}-\eqref{initial} is reduced to 
\begin{equation}\label{c-p-fourier}
\left \{ 
\begin{array}{l}\displaystyle \partial_t \psi(t,\xi)
=\int_{\SS^2}b\left(\frac{\xi \cdot \sigma}{|\xi|}\right) \Big( \psi(t,\xi^+)\psi(t, \xi^-) - \psi(t, \xi)
\psi(t,0)\Big) d\sigma, \\\\
\displaystyle \psi(0,\xi)=\psi_0(\xi), \enskip \mbox{where} \enskip
\displaystyle \xi^\pm = \frac{\xi}{2} \pm \frac{|\xi|}{2} \sigma\,.
\end{array}
\right.
\end{equation}
By Theorem \ref{remark-ck}, $\psi(t,\xi) \in 
C([0,\infty),\cK^\alpha)$. 
Define a time dependent weight function 
\[
M_\delta(t,\xi) 
= \la \xi \ra^{Nt^2-4} \la \delta \xi\ra^{-2N_0}\,, \enskip \enskip \la \xi \ra^2 = 1+|\xi|^2\,,
\]
where  $N_0 = NT^2/2 +2$, $N \in \NN$ and $\delta >0$.  We multiply  the
first equation of \eqref{c-p-fourier} by $M_\delta(t,\xi)^2 \overline{\psi(t,\xi)}$ 
and integrate with respect to $\xi$ over $\RR^3$.  Denote $\psi^{\pm} = \psi(t,\xi^{\pm})$ and $M^+ =  M_\delta(t,\xi^+)$ to simplify the notation and
note that
\begin{align*}
-2 \mbox{Re} \, \, \Big\{(\psi^+\psi^- - \psi) M^2 \overline{\psi}\Big\}&=\Big( |M\psi|^2 +|M^+\psi^+|^2 - 
2\mbox{Re} \, \Big\{\psi^- (M^+\psi^+)\overline{M\psi}\Big\}\Big)\\
+ \Big(|M\psi|^2 &-|M^+\psi^+|^2\Big) + 2\mbox{Re} \, \Big\{\psi^- \big((M-M^+)\psi^+\big)\overline{M\psi}\Big\}\\
& = J_1 + J_2 + J_3\,.
\end{align*}
Using the Cauchy-Schwarz inequality for the third term of $J_1$, we have 
\begin{align*}
J_1 \ge (1 -|\psi^-|) \Big(|M\psi|^2 +|M^+\psi^+|^2\Big)  \ge  (1 -|\psi^-|) |M\psi|^2\,.
\end{align*}
Therefore, by means of \eqref{time-deg-coer} we get
\begin{align}\label{coer}
\int_{\RR^3\times \SS^2} b\Big(\frac{\xi}{|\xi|}\cdot \sigma\Big)J_1 d\sigma d\xi 
+ \int_{\RR^3} |M\psi|^2  d\xi 
 \gtrsim t \int_{\RR^3}\la \xi\ra^{2s}|M\psi|^2 d\xi,
\end{align}
where $A \gtrsim B$ means that there exists a constant $C_0 >0$
such that $A \ge C_0 B$.
If we use the change of variable $\xi \rightarrow  \xi^+$ for the term $M^+\psi^+$ in $J_2$, by  the cancellation
lemma (Lemma 1 of \cite{ADVW}), we have
\begin{align*}
&\left|\int_{\RR^3\times \SS^2} b\Big(\frac{\xi}{|\xi|}\cdot \sigma\Big)J_2 d\sigma d\xi \right| \\
&\qquad  =2\pi  \left|\int_{\RR^3} |M\psi|^2 \Big( \int_0^{\pi/2} b(\cos \theta )\sin \theta \Big( 1- \frac{1}{\cos^3 (\theta/2)}\Big) d\theta\Big) d\xi\right|\\
 &\qquad \lesssim  \int_{\RR^3} |M\psi|^2 d\xi\,,
\end{align*}
where $A \lesssim B$ means that there exists a constant $C_0 >0$
such that $A \le C_0 B$.
Since $|M-M^+| \lesssim \sin^2(\theta/2) M^+$ (see (3.4) of \cite{MUXY-DCDS}), by the Cauchy-Schwarz inequality
we also have the same upper bound estimate for $J_3$
by using again the change of variable $\xi \rightarrow  \xi^+$ for the term including $M^+\psi^+$.
Since 
\begin{align*}
2 \mbox{Re}\,\Big(\frac{\pa \psi}{\pa t} M^2 \overline{\psi} \Big) =
\frac{\pa |M\psi|^2 }{\pa t}  - 4N t \log \la \xi \ra  |M\psi|^2\,,
\end{align*}
and 
 $|\xi|^{2s}/ \log \la \xi \ra  \rightarrow \infty$ as $|\xi| \rightarrow \infty$, we have  
\begin{align*}
\frac{d} {dt} \int_{\RR^3} |M_{\delta}(t,\xi)\psi(t,\xi)|^2 d\xi  \lesssim \int_{\RR^3} |M_{\delta}(t,\xi)\psi(t,\xi)|^2 d\xi  \,,
\end{align*}
which gives for $t \in (0,T]$
\[
\int_{\RR^3} |   \la \xi \ra^{Nt^2-4} \Big(1+\delta |\xi|^2\Big)^{-N_0} \psi(t,\xi)|^2 d\xi 
\lesssim \int_{\RR^3} |   \la \xi \ra^{-4} \psi_0(\xi)|^2 d\xi \,.
\]
Letting $\delta \rightarrow 0$, we obtain the first part of
Theorem \ref{main-thm} because we can take an arbitrarily large $N$. 

We now turn to the second part of the theorem when $F_0 \in P_2(\RR^3)$. 
{We notice  that the energy of solution is uniformly bounded by 
that of the initial datum (see Proposition \ref{energy-con}
in the Appendix), so that
we have $\int |v|^2 f(T,v)dv \leq  \int |v|^2 dF_0(v)$ for a $T>0$
given in Lemma \ref{time-deg-thm}.} 	In view of 
$f(T,v) \in L^\infty(\RR^3)$ we obtain 
\[
\|f(T) \|_{L \log L}: =\int f(T,v) \log( 1+ f(T,v))dv < \infty,
\]
so that $f(T) \in L^1_2 \cap L \log L$. It follows from Theorem 1 in \cite{villani} that
\begin{equation}\label{uniform}
\sup_{t \ge T} \Big(\|f(t)\|_{L^1_2} + \|f(t)\|_{L \log L} \Big) < \infty,
\end{equation}
which shows that there exists a  $\kappa >0$ independent of $t \ge T$ 
such that
\[
1 - |\psi(t, \xi)| \ge \kappa \min (1, |\xi|^2),
\]
by means of Lemma 3 in \cite{ADVW}. Therefore, for $|\xi| \ge R$ for
some $R >0$ suitably large, we have
\begin{align*}
\int_{\SS^2}b\Big(\frac{\xi}{|\xi|}\cdot \sigma\Big) (1 - |\psi(t,\xi^-)| ) d\sigma
&\ge 2 \pi \kappa \int_0^{|\xi|^{-1}} b(\cos \theta) 
|\xi^-|^2 \sin \theta d\theta\\
&\gtrsim |\xi|^2 \int_0^{|\xi|^{-1} }\theta^{1-2s}d\theta \gtrsim |\xi|^{2s},
\end{align*}
which gives the standard  coercivity estimate instead of \eqref{time-deg-coer}.
Hence this leads us to $f(t,v) \in H^\infty(\RR^3)$ for $\forall t >T$
by the same argument used in \cite{MUXY-DCDS}.

It remains to show the last statement in the theorem for the case
when $F_0\in P_1(\RR^3)$ with $0<s<\frac 12$. In fact, it follows from the almost same argument in the above proof of the second part, if one uses Proposition
\ref{entropy-finite} in the Appendix with $t=0$  replaced by a small $t=T$ given in  Lemma \ref{time-deg-thm}.
\end{proof}
\bigskip
The rest of the paper will be organized as follows. In the next section, we will
prove Lemma \ref{time-deg-thm} about the degenerate coercivity estimate which is the key estimate to show the smoothing effect. And in the Appendix, we will recall the existence and uniquess result obtained in
 \cite{Cannone-Karch, morimoto-12} and show the continuity of the time derivative of the solution which is needed in Section \ref{s2}. 
It will be also shown in the Appendix that the energy of the solution for 
 the initial datum $F_0 \in P_2(\RR^3)$ is bounded.

\section{Degenerate coercivity estimate}\label{s2}
\setcounter{equation}{0}
To obtain the coercivity estimate for measure value function which is not
concentrated at a single point, we will consider two cases, that is, the case
when the measure is concentrated on a straight line and otherwise. Unlike the
standard coercivity estimate obtained in the previous works, the key observation
is that the coercivity estimate is degenerate in the time variable as shown
in Lemma \ref{time-deg-thm}. That is, one can not expect to have a gain
of regularity of order $2s$ uniformly up to initial time. For this, we need to consider
the time derivative of $\psi(t,\xi^-)$ in the case when $\xi$ is parallel to the
straight line of the concentration of the measure. For clear presentation, the coercivity is estimated in the following two subsections.

\subsection{Initial measure not concentrated on a straight line}\label{non-line}

We now consider the case when  $F_0(v)$ is not
concentrated on a straight line. In this case, without loss of generality,
we can assume that there exist three small balls denoted by
$A_i=B(\vb_i,\delta)$ with center at $v={\vb}_i$ and radius $\delta>0$ such that
$\int_{A_i} dF_0(v)=m_i>0$, for $i=1,2,3$. Up to a linear coordinate
transform, we can assume $\vb_1=\mathbf{0}$, $\vb_2$ and $\vb_3$ are linearly independent.
That is
$$
\eta_0=1-\big| \frac{\vb_2}{|\vb_2|}\cdot \frac{\vb_3}{|\vb_3|}\big|=1-|\cos\alpha|>0,
$$
where $\alpha$ is the angle
between $\vb_2$ and $\vb_3$.
Take two positive constants  $d_1 < d_2$ such that
\begin{align*}
0<  d_1 \min\{|\vb_2|, |\vb_3|\}<
d_2 \max\{ |\vb_2|, |\vb_3|\}\le \frac{\pi}{2}\,.
\end{align*}
Put $d = (d_1 +d_2)/2$. Firstly, we assume that $\xi^-$  varies on  the circle 
\begin{equation}\label{circle}
 \cC = \{ \xi \in \RR^3 \, ; \, |\xi| = d, \enskip \xi \, \bot\, (\vb_2 \times \vb_3)\,\}.
\end{equation}
In the following
discussion, we choose $\delta>0$ to be sufficiently small.
\begin{figure}[ht]\label{f-1}
\centering
\includegraphics[width=7cm]{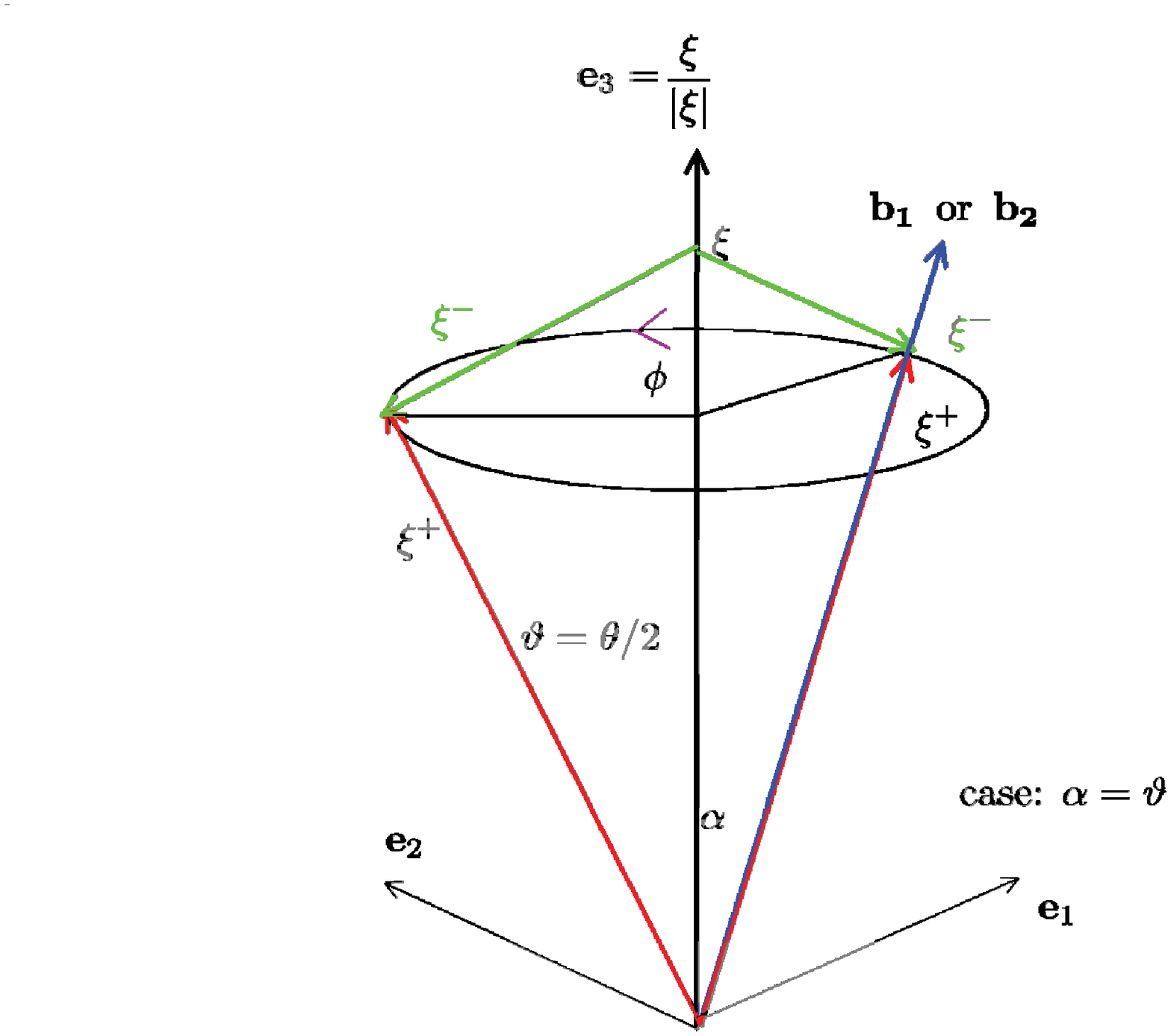}
\caption{$\xi^-$ and three vectors $\vb_1, \vb_2, \vb_3$ }
\end{figure}

Denote
$$
\int_{A_j} e^{-iv\cdot\xi^-} dF(v)=m_j(a_j+ i b_j), \qquad j=1,2,3.
$$
Note that $|a_j+i b_j|\le 1$. With the above notations, it is straightforward
to check that
\begin{eqnarray*}
&&(a_1,b_1)= (1,0)+ \ve_1, \quad \\
&&
(a_2,b_2)= (\cos(|\xi^-||\vb_2|\cos\gamma_1), 
\sin(|\xi^-||\vb_2|\cos\gamma_1)) + \ve_2,\\
&&(a_3,b_3)= (\cos(|\xi^-||\vb_3|\cos\gamma_2), 
\sin(|\xi^-||\vb_3|\cos\gamma_2)) + \ve_3,
\end{eqnarray*}
where $\gamma_1$ is the angle between the vectors $\xi^-$ and
$\vb_2$,
$\gamma_2$ is the angle between the vectors $\xi^-$ and $\vb_3$, 
$|\ve_i|=0(1)\delta$, $i=1,2,3$. 
Notice that
$\gamma_2=\gamma_1 \pm \alpha$.
With the above choice of parameters, we have when $\delta$ is sufficiently
small, 
\begin{eqnarray*}
&&2-\big|\frac{(a_1, b_1)}{|(a_1,b_1)|}\cdot \frac{(a_2, b_2)}{|(a_2,b_2)|}\big|
-\big|\frac{(a_1, b_1)}{|(a_1,b_1)|}\cdot \frac{(a_3, b_3)}{|(a_3,b_3)|}\big|\\
&&=2-\cos(|\xi^-||\vb_2|\cos\gamma_1 )-\cos(|\xi^-||\vb_3|\cos(\gamma_1 \pm\alpha)) 
+0(1)\delta\\
&&\ge c_0\eta_0,
\end{eqnarray*}
where $c_0>0$ is a constant independent of $\delta$.
Hence, if $\psi_0(\xi) = \int e^{-iv\cdot \xi} dF_0(v)$ and $\xi^-$ varies on 
$\cC$ defined by \eqref{circle}, then we have
\begin{align}\label{most}
&\psi_0(0)-|\psi_0(\xi^-)|=1-|\int_{A^c\cup_{j=1}^{3}A_j} e^{-i v\cdot \xi^-} d F_0(v)|\\
&\ge \sum_{j=1}^{3}\int_{A_j} dF_0(v) -|\sum_{j=1}^{3}\int_{A_j}  e^{-iv\cdot \xi^-} d F_0(v)|\notag \\
&=\sum_{j=1}^{3} m_j -|\sum_{j=1}^{3} m_j(a_j+i b_j)|\notag \\
&\ge \min\{m_1,m_2,m_3\}\Big( 3- |\sum_{j=1}^{3}(a_j +i b_j)|\Big) \notag \\
&\ge \frac{1}{3}\min\{m_1,m_2,m_3\}\Big\{2-\big|\frac{(a_1, b_1)}{|(a_1,b_1)|}\cdot \frac{(a_2, b_2)}{|(a_2,b_2)|}\big|
-\Big|\frac{(a_1, b_1)}{|(a_1,b_1)|}\cdot \frac{(a_3, b_3)}{|(a_3,b_3)|}\big|\Big\} \notag \\
&\ge   \frac{1}{3}\min\{m_1,m_2,m_3\}            c_0 \eta_0 := \kappa_0 , \notag
\end{align}
because $|a_j +ib_j| \le 1$ and 
\begin{align*}
&|\sum_{j=1}^{3}(a_j +i b_j)|^2 \le \Big(|a_1 +i b_1| + \sum_{j=2}^3 |a_j + i b_j| 
\Big|\frac{(a_1, b_1)}{|(a_1,b_1)|}\cdot \frac{(a_j, b_j)}{|(a_j,b_j)|}\Big|\Big)^2\\
&\qquad \qquad \qquad \qquad +\Big(\sum_{j=2}^3 |a_j + i b_j| 
\Big|\frac{(a_1, b_1)}{|(a_1,b_1)|}\times \frac{(a_j, b_j)}{|(a_j,b_j)|}\Big|\Big)^2\\
&\qquad \qquad \le \Big(1 + \sum_{j=2}^3 
\Big|\frac{(a_1, b_1)}{|(a_1,b_1)|}\cdot \frac{(a_j, b_j)}{|(a_j,b_j)|}\Big|\Big)^2
+\Big(\sum_{j=2}^3 
\Big|\frac{(a_1, b_1)}{|(a_1,b_1)}\times \frac{(a_j, b_j)}{|(a_j,b_j)|}\Big|\Big)^2\\
&\qquad \qquad \le \, 5 + 2 \sum_{j=2}^3 
\Big|\frac{(a_1, b_1)}{|(a_1,b_1)|}\cdot \frac{(a_j, b_j)}{|(a_j,b_j)|}\Big|\,\, .
\end{align*}

Since $\psi(t, \xi)$ is continuous (see Theorem \ref{remark-ck} in the Appendix) and $\psi(0,\xi) = \psi_0(\xi)$, by means of \eqref{most}, 
there exist $\mu >0$,  $\varepsilon >0$ 
and $T >0$ such that 
for any $\xi^-$ belonging to the set 
\begin{equation}\label{domain}
 \cC_{\mu, \varepsilon} = \{ \eta \in \RR^3 \, ; \, d - \mu \le |\eta| \le d + \mu, \enskip
\left| \frac{\eta}{|\eta|} \cdot \Big(\frac{\vb_2 \times \vb_3}{|\vb_2 \times \vb_3|}\Big)\right| \le \varepsilon \,\},
\end{equation}
we have 
\begin{align} \label{key-one}
1  - |\psi(t, \xi^-)| \ge \kappa_0/2 \enskip \enskip \mbox{for }\enskip 
t \in [0,T]\,.
\end{align}
Take a  $R >0$ such that $(d + \mu)/R = \varepsilon/10$. Let $|\xi| \ge R$, and for 
$\omega =\xi/|\xi| \in \SS^2$ take the coordinate
$\sigma = (\theta, \phi) \in [0,\pi/2]\times [0, 2\pi]$ with the pole $\omega$.
Write 
\[
\xi^- = \frac{\xi}{2} - \frac{|\xi|}{2} \sigma = \xi^-(\theta, \phi). \]
If $\theta$ satisfies 
\[
d-\mu \le |\xi^-(\theta, \phi)|= |\xi| \sin \frac{\theta}{2} \le d + \mu,
\]
then 
there exists an interval  $I_\omega \subset [0,2\pi]$ such that $\xi^-(\theta, \phi) 
\in \cC_{\mu, \varepsilon}$ for $\phi \in I_\omega$ because
$\theta/2 \le \sin^{-1} (d+\mu)/R < \varepsilon /5$ and the set
\[\{ \lambda \xi^{-}(\theta,\phi) \in \RR^3 \,;\, \phi \in [0,2\pi], 0\le \lambda \le 1 \}
\]
intersects the plane spanned by $\vb_2$ and $\vb_3$ when 
$|\omega \cdot (\vb_2 \times \vb_3)/|\vb_2 \times \vb_3|| < \cos \theta/2$
(see Figure 2). 

\begin{figure}[ht]\label{f-2}
\begin{center}
\includegraphics[width=10cm]{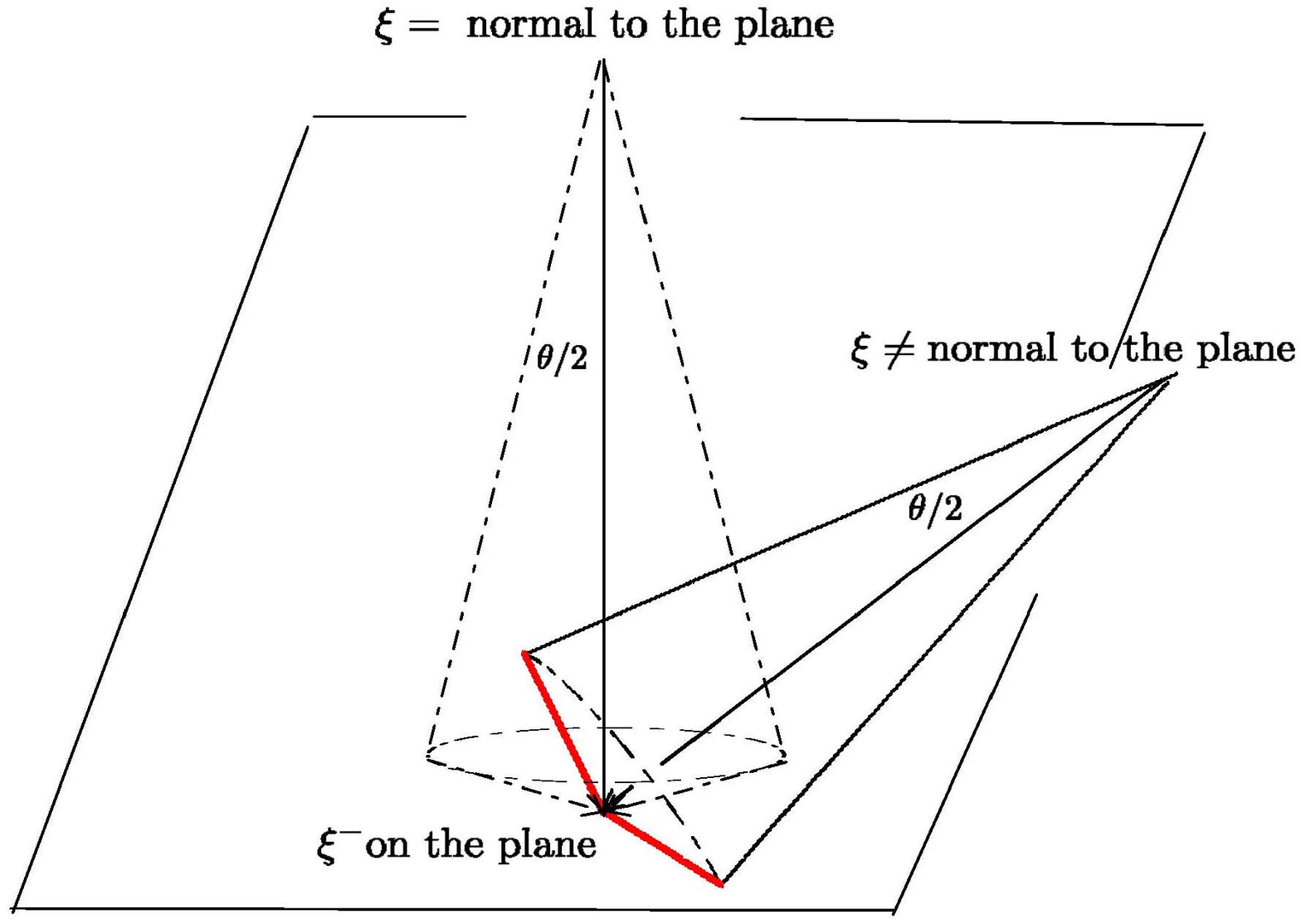}
\caption{Intersection between $\{\xi^-\}$ and the plane spanned by $\vb_2, \vb_3$}
\end{center}
\end{figure}

It is obvious that the interval $I_\omega$ plays the same role for $\tilde \omega \in \SS^2$ close to 
$\omega$. Therefore, 
for 
any $\xi$ belonging to a conic neighborhood of $\omega$
\[
\Gamma_\omega = \{ \xi \in \RR^3\,;\, 
\Big| \frac{\xi}{| \xi|} -\omega\Big|< \varepsilon_\omega ,\, |\xi| \ge R \}
\]
with a sufficiently small $\varepsilon_\omega >0$, we have

\begin{align*}
&\int_{\RR^3}\Big(\int_{\SS^2} b \Big(\frac{\xi}{|\xi|}\cdot \sigma\Big)
\Big(1 - |\psi(t, \xi^-|)\Big)d\sigma\Big)|h(\xi)|^2 d\xi\\
&\gtrsim \int_{\Gamma_\omega}\Big(\int_{I_\omega} d\phi \int_{2\sin^{-1}(d-\mu)/|\xi|}
^{2 \sin^{-1}(d +\mu)/|\xi|}\theta^{-1-2s} \frac{\kappa_0}{2} d \theta \Big )|h(\xi)|^2 d\xi \\
&\gtrsim  \int_{\Gamma_\omega} |\xi|^{2s}|h(\xi)|^2 d\xi,
\end{align*}
which together with the standard covering argument on $\SS^2$ yields
\begin{align*}
&\int_{\RR^3}\Big(\int_{\SS^2} b \Big(\frac{\xi}{|\xi|}\cdot \sigma\Big)
\Big(1 - |\psi(t, \xi^-|)\Big)d\sigma\Big)|h(\xi)|^2 d\xi\\
&\qquad  + \int_{\RR^3}|h(\xi)|^2 d\xi \gtrsim 
\int_{\RR^3} \langle \xi \rangle^{2s}|h(\xi)|^2 d\xi,
\end{align*}
if $t \in [0,T]$.


\subsection{Initial measure concentrated on a straight line}\label{line}
We now consider the case when  $F_0(v)$ is 
concentrated on a straight line and not equal to  a single Dirac measure.  By means of a suitable choice of the coordinate 
we may assume that $F_0(v) = \delta(v') F_{03}(v_3)$ and its Fourier transform $\psi_0(\xi) = \psi_{03}(\xi_3)$, where  $\psi_{03}$ is the Fourier transform of $F_{03}$. 
Since $F_{03}(v_3)$ is not a point Dirac measure in $\RR$, it follows from Corollary 3.5.11 in \cite{jacob} that
there exists a $\xi_{03} >0$ such that $|\psi_{03}(\pm \xi_{03})| <1$, in view of $\psi(-\xi) = \overline{\psi(\xi)}$. 
By means of the continuity of $\psi$, there exist $0< \kappa < 1$ and  $0 < a_1 < a_2 $  such that 
\begin{equation}\label{key}
|\psi_0(\xi',  \xi_3)| \le 1 - \kappa, \enskip  \enskip \mbox{for} \enskip \forall \xi' \in \RR^2, \enskip \forall \xi_3\in \RR \enskip \mbox{with } \enskip 
a_1 \le |\xi_3| \le a_2 \,.
\end{equation}

We now split the discussion into two cases.

\subsubsection{The case when $\xi^-$ is almost orthogonal to the third axis }

For the sake of simplicity, we denote $\xi^-$ by $\xi$ throughout this subsection except for the case when  confusion might occur. We also denote $\psi$ instead of $\psi_0$ for brevity.

Note that
\begin{align*}
(\pa_t |\psi|^2)(0,\xi) & = 2\,  \mbox{Re} \int_{\SS^2} b\Big(\frac{\xi}{|\xi|}\cdot \sigma\Big) 
\Big(\psi^+\psi^- \overline{\psi} - |\psi|^2 \Big) d\sigma\\
& = -  \int_{\SS^2} b\Big(\frac{\xi}{|\xi|}\cdot \sigma\Big)  \Big( |\psi^+|^2 +|\psi|^2 - 2\,  \mbox{Re} \{\psi^- \psi^+  \overline{\psi}\} \Big)d \sigma\\
&\qquad + \int_{\SS^2} b\Big(\frac{\xi}{|\xi|}\cdot \sigma\Big) \Big( |\psi^+|^2 - |\psi|^2\Big) d \sigma\\
& \le - \int_{\SS^2} b\Big(\frac{\xi}{|\xi|}\cdot \sigma\Big)  \Big(1- \psi^-\Big ) \Big( |\psi^+|^2 +|\psi|^2 \Big) d\sigma\\
&\qquad + \int_{\SS^2} b \Big(\frac{\xi}{|\xi|}\cdot \sigma\Big) \Big( |\psi^+|^2 - |\psi|^2\Big) d \sigma.
\end{align*}
If we put $\xi = \lambda \ve_2$ ($\lambda >0$) in the above estimate and take the 
polar coordinate $\sigma = (2\beta, \chi) \in [0,\pi/2]
\times [0,2\pi]$, (where $\chi$ starting from $\xi_3 =0$, see Figure 3) then
\begin{align*}
(\pa_t |\psi|^2)(0,\lambda \ve_2 ) \le - 2\int_0^{\pi/4} d \beta \int_0^{2\pi}d \chi
b(\cos 2 \beta) (\sin 2\beta)\Big (1- |\psi_3(\lambda \cos \beta \sin \chi)| \Big),
\end{align*}
because $\psi(\lambda \ve_2) =1$ and $|\psi|\le 1$. 
\begin{figure}[ht]\label{f-3}
\begin{center}
\includegraphics[width=8cm]{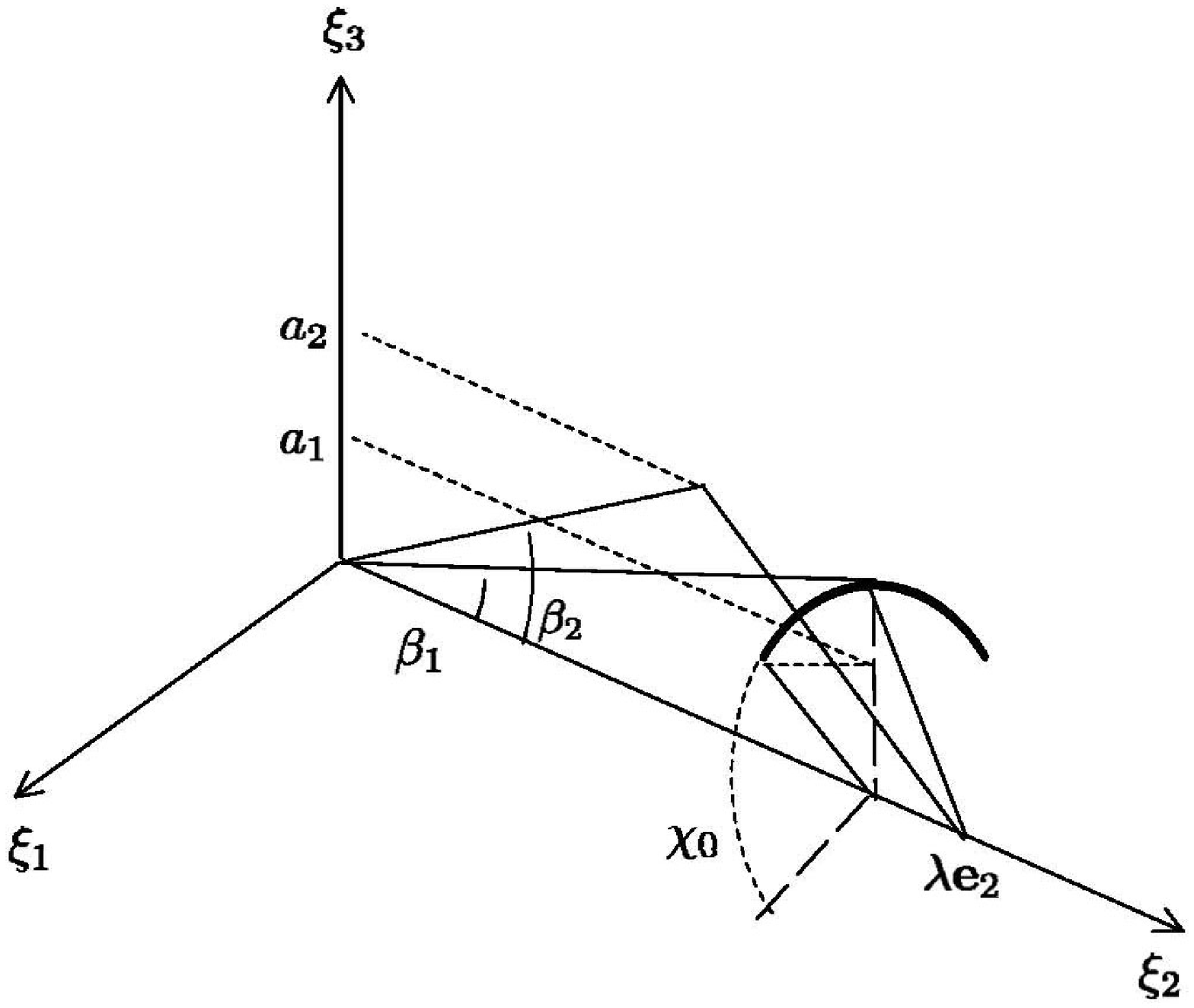}
\caption{$\xi^-$ with $\beta =\beta_1$ and $\chi \in [\chi_0, \pi - \chi_0]$}
\end{center}
\end{figure}
Choose $0< \beta_1 < \beta_2 < \pi/4$, $\chi_0 \in (0, \pi/2)$ and $\lambda >0$ such that
\begin{align*}
&\lambda \cos \beta_2 \sin \chi_0 =a_1 \,, \enskip \lambda \cos \beta_1 = a_2\,,\\
& 2\int_{\beta_1}^{\beta_2} b(\cos 2 \beta) (\sin 2\beta) d\beta = c_0 >0\,.
\end{align*}
Then it follows from \eqref{key} that
\begin{align*}
(\pa_t |\psi|^2)(0,\lambda \ve_2 ) \le - 2\int_{\beta_1}^{\beta_2} d \beta \int_{\chi_0}
^{\pi - \chi_0}d \chi
b(\cos 2 \beta) (\sin 2\beta)\kappa = - \kappa c_0 (\pi - 2 \chi_0)\,.
\end{align*}
Since $\psi$ is symmetric around $\xi_3$ axis, we have
\[
(\pa_t |\psi|^2)(0,\xi ) \le  - \kappa c_0 (\pi - 2 \chi_0), \enskip \mbox{if}\enskip
\xi \cdot \ve_3 =0 \enskip \mbox{and} \enskip |\xi| = \lambda.
\]
If we set $c_1 = \kappa c_0 (\pi - 2 \chi_0)$, then there exist $\varepsilon >0$, $T >0$ and $\delta >0$
such that
\begin{align*}
&(\pa_t |\psi|^2)(t,\xi) \le - c_1/2, \enskip\\
&\mbox{when $(t, \xi) \in  [0,T] \times \Big\{ \xi \in \RR^3\,;\, \big| |\xi|-\lambda\big| \le \delta, 
\left|\frac{\xi}{|\xi|} \cdot \ve_3\right| \le 2 \varepsilon \Big \}:=[0,T] \times \Gamma $,}
\end{align*}
because of the continuity of $\psi$ and $\pa_t \psi$ (see Theorem \ref{remark-ck} in the 
Appendix).

In what follows we use the notation $\xi^- = (\xi - |\xi| \sigma)/2$ to obtain the microlocal
time degenerate coercivity estimate.  If $(t, \xi^-)$ belongs to the  region 
$[0,T] \times \Gamma$, then it follows from the mean value theorem that there
exists
a $\rho \in(0,1)$ such that
\begin{align*}
1 - |\psi(t,\xi^-)| &\ge \frac{1- |\psi(t,\xi^-)|^2}{2}
= \frac{1}{2}
\Big(1 - |\psi(0,\xi^-)|^2 - (\pa_t |\psi|^2)(\rho t,\xi^- ) t \Big)\\
& \ge \frac{c_1}{4} t\,.
\end{align*}
Set $R_0 = (\lambda + \delta)/\varepsilon$ and 
\begin{equation}\label{paralel}
\Omega_0 = \{\xi \in \RR^3\,; \, |\xi| \ge R_0, \, \left| 1- \Big|\frac{\xi}{|\xi|}\cdot \ve_3 \Big| \right| 
\le \frac{2 \varepsilon^2}{\pi^2}\}\, \enskip (\mbox{see Figure 4}).
\end{equation}
If $\sigma = (\theta, \phi)$, we notice that $|\xi^-| = |\xi|\sin(\theta/2)$.
Moreover,
the fact that $\xi \in \Omega_0$ and $\sin \frac{\theta}{2} \le (\lambda +\delta)/|\xi|$
implies $\left|\frac{\xi^-}{|\xi^-|} \cdot \ve_3\right| \le 2 \varepsilon $.
\begin{figure}[ht]\label{f-4}
\begin{center}
\includegraphics[width=6cm]{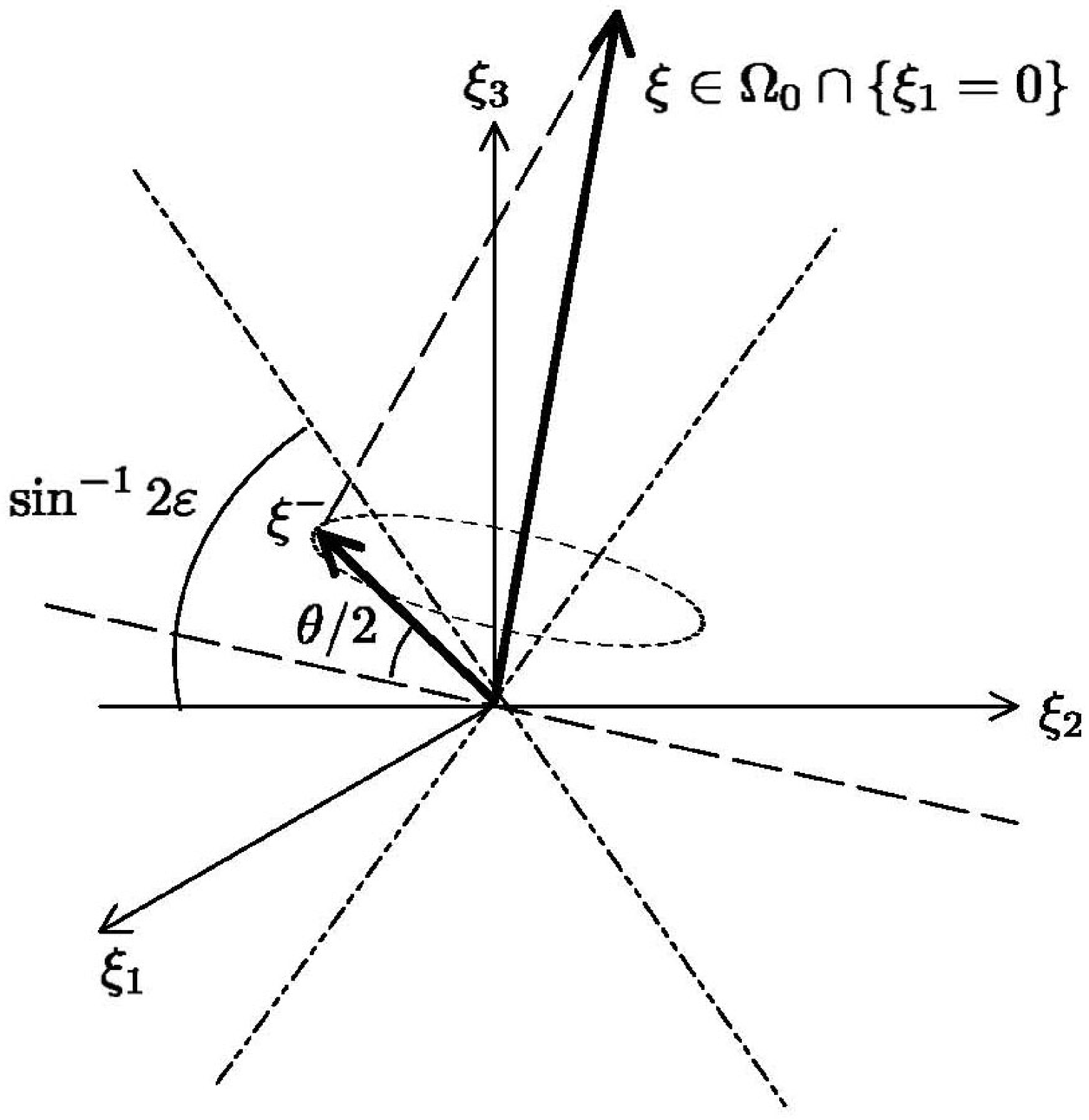}
\caption{$\xi \in \Omega_0$ and $\xi^-$ almost orthogonal to $\xi_3$}
\end{center}
\end{figure}
Therefore,
if $t \in [0,T]$ and if $h(\xi) \in L^2_s (\RR^3)$, then we have the micolocal
coercivity estimate in $\Omega_0$
\begin{align*}
&\int_{\RR^3} \Big(\int_{\SS^2}b\Big(\frac{\xi}{|\xi|}\cdot \sigma\Big) (1 - |\psi(t,\xi^-)| ) d\sigma\Big)
|h(\xi)|^2 d \xi\\
&\gtrsim \int_{\Omega_0} \Big(\int_{2 \sin^{-1}(\lambda-\delta)/|\xi|}
^{2 \sin^{-1}(\lambda+\delta)/|\xi|}\theta^{-1-2s} \frac{c_1t}{4}d\theta \Big)|h(\xi)|^2 d \xi\\
& \gtrsim t \int_{\Omega_0} |\xi|^{2s}|h(\xi)|^2 d \xi\,.
\end{align*}

\subsubsection{The microlocal coercivity estimate in $\Omega_0^c$}
In this subsection, we consider the case when $\xi$ belongs to
\[
\Omega_1= \Big\{\xi\in \RR^3\,; \, 
\left| 1- \Big|\frac{\xi}{|\xi|}\cdot \ve_3 \Big| \right| 
> \frac{2 \varepsilon^2}{\pi^2}
\Big\} \subset \Omega_0^c.
\]

\begin{figure}[ht]\label{f-5}
\begin{center}
\includegraphics[width=9cm]{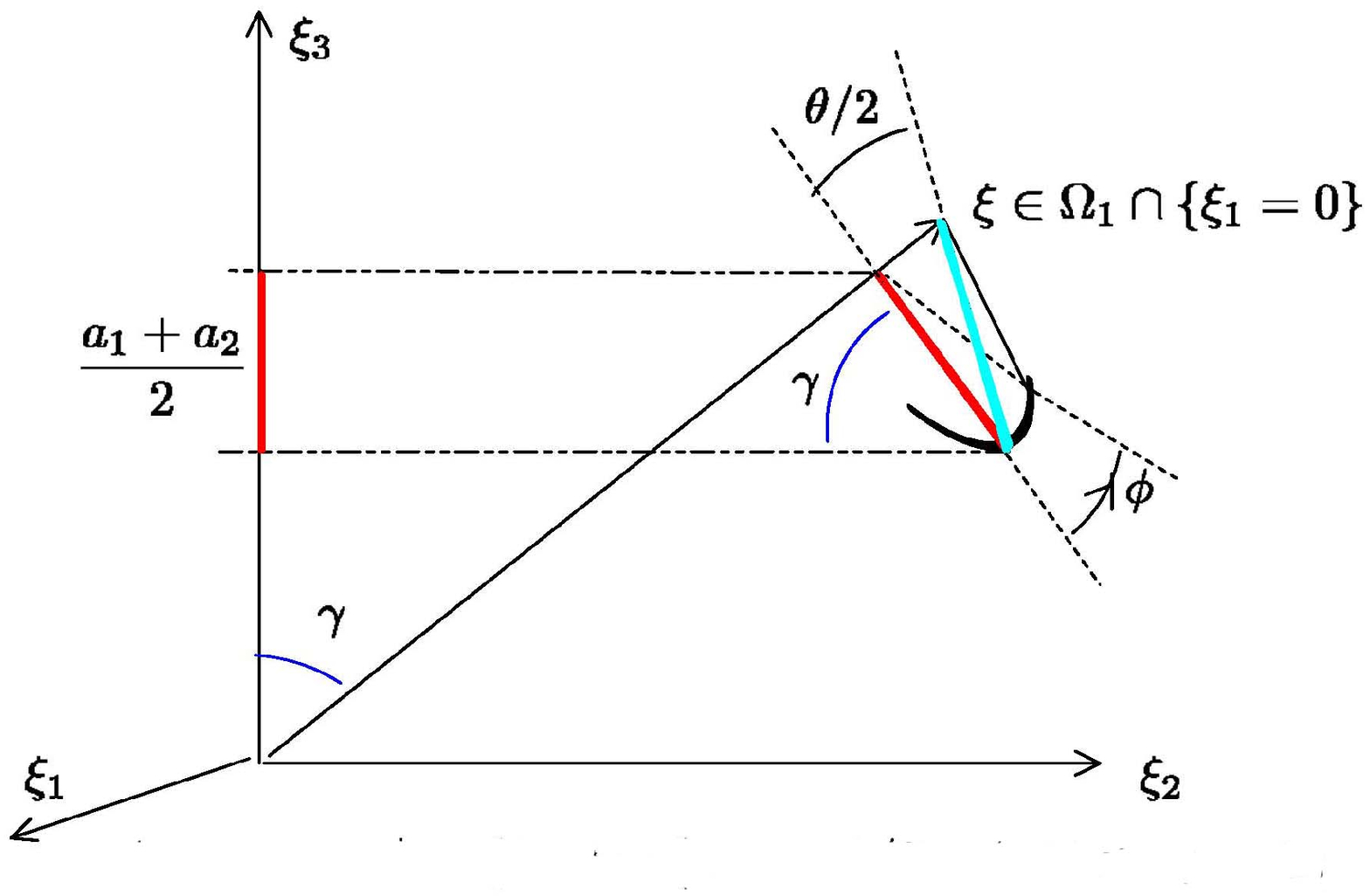}
\caption{$\mbox{Case:}\, |\xi^-\cdot 
{\bf e}_3|\cos( \theta/2 )=
(a_1+a_2)/2
\,\,\mbox{and} \,\, \xi^- \in \{\xi_1=0\}$}
\end{center}
\end{figure}

Fix an arbitrary $\omega \in \SS^2 \cap \Omega_1\cap \{ \omega\cdot \ve_3 \ge 0\}$.
Take a $\lambda >0$ such that $\lambda \sin \gamma = (a_1 + a_2)/2$, where $\gamma > 2 \varepsilon /\pi$
is the angle
between $\omega$ and $\ve_3$. 
If we take the polar coordinate $\sigma = (\theta, \phi) \in [0,\pi/2]\times [0,2\pi]$
with the pole $\omega = \xi/|\xi|$ and $\phi$ starting from the plane $\xi_1 =0$
(see Figure 5), then we have
\begin{equation}\label{est-deg}
\xi^- \cdot \ve_3 = |\xi^-|\big(\cos\frac{\theta}{2} \cos \phi \sin \gamma + \sin \frac{\theta}{2} 
\cos \gamma \Big),
\end{equation}
where $\xi^- = (\xi - |\xi|\sigma)/2$.
There exist
$\delta = \delta_\omega >0$, $\phi_\omega \in (0, \pi/4]$ and $\theta_\omega \in [0,\pi/4]$
such that
\begin{align}\label{projection}
&a_1 < (\lambda - \delta) \cos( \theta_\omega/2)\cos \phi_\omega \sin \gamma
 \\
& < (\lambda + \delta) \Big(\sin \gamma +  \tan \theta_\omega/2\Big) < a_2. \notag
\end{align}
Put $R_\omega \sin \theta_\omega/2 = \lambda + \delta_\omega$ and let  $\xi = |\xi|\omega$ with $|\xi| \ge R_\omega$. 
If $|\xi^-|= |\xi| \sin \theta/2 \in [\lambda -\delta, \lambda + \delta]$ and $|\xi| \ge R_\omega$, then
$\theta \le \theta_\omega$. Moreover, when $|\phi| \le \phi_\omega$ we have
\begin{align}\label{favorable}
\xi^- \cdot \ve_3 \in (a_1,a_2).
\end{align}
Since \eqref{projection} still holds  for other $\tilde \gamma$ close to $\gamma$, we have
\eqref{favorable} for 
any $\xi$ belonging to a conic neighborhood of $\omega$
\[
\Gamma_\omega = \{ \xi \in \RR^3\,;\, 
\Big| \frac{\xi}{| \xi|} -\omega\Big|< \varepsilon_\omega ,\, |\xi| \ge R_\omega \},
\]
with a sufficiently small $\varepsilon_\omega >0$, if $(\theta, \phi)$ varies in the same region
as above. Since $\psi(t,\xi)$ is continuous, it follows from \eqref{key}
that there exists a $T_\omega >0$ such that for any $ t \in [0,T_\omega]$ we have 
\[ |\psi(t,\xi^-)| \le 1-\frac{\kappa}{2}
\enskip \mbox{if} \enskip |\xi^-| \in [\lambda-\delta, \lambda+\delta]
\enskip \mbox{and} \enskip \xi^- \cdot \ve_3 \in [a_1,a_2].
\]
Therefore
\begin{align*}
&\int_{\RR^3}\Big(\int_{\SS^2} b \Big(\frac{\xi}{|\xi|}\cdot \sigma\Big)
\Big(1 - |\psi(t, \xi^-|)\Big)d\sigma\Big)|h(\xi)|^2 d\xi\\
&\gtrsim \int_{\Gamma_\omega}\Big(\int_{-\phi_{\omega}}^{\phi_\omega} d\phi \int_{2\sin^{-1}(\lambda-\delta)/|\xi|}
^{2 \sin^{-1}(\lambda +\delta)/|\xi|}\theta^{-1-2s} \frac{\kappa}{2} d \theta \Big )|h(\xi)|^2 d\xi \\
&\gtrsim  \int_{\Gamma_\omega} |\xi|^{2s}|h(\xi)|^2 d\xi.
\end{align*}
The estimation for $\omega \in \SS^2 \cap \Omega_1 \cap \{\omega \cdot \ve_3 \le 0\}$ is similar, so that we omit it for brevity.

\subsubsection{The conclusion}
By means of the covering argument, we have for a sufficiently large $R >0$ and a sufficiently small $T>0$,
\begin{align*}
&\int_{\RR^3} \Big(\int_{\SS^2}b\Big(\frac{\xi}{|\xi|}\cdot \sigma\Big) (1 - |\psi(t,\xi^-)| ) d\sigma\Big)
|h(\xi)|^2 d \xi\\
&\qquad \gtrsim t \int_{\{|\xi| \ge R\}} |\xi|^{2s}|h(\xi)|^2 d \xi\, ,\enskip t \in [0, T]\,.
\end{align*}
This together with the coercivity estimate obtained in the first
subsection concludes the proof of Lemma \ref{time-deg-thm}.

Before ending this subsection, we remark that if $\psi_0(\xi) =
\int e^{-iv\cdot \xi} dF_0(v)$,  then for a large 
$R>0$ we have the following degenerate coercivity estimate
\begin{align}\label{usual-deg}
&\int_{\RR^3} \Big(\int_{\SS^2}b\Big(\frac{\xi}{|\xi|}\cdot \sigma\Big) (1 - |\psi_0(\xi^-)| ) d\sigma\Big)
|h(\xi)|^2 d \xi\\
&\qquad \gtrsim  \int_{\{|\xi| \ge R\}} \big(|\xi_1|^2 +|\xi_2|^2 +| \xi_3|\big)^{s}|h(\xi)|^2 d \xi\, .\notag
\end{align}
Indeed, it follows from \eqref{est-deg} that
\[
\xi^- \cdot \ve_3 \sim |\xi| (\theta \gamma \cos \phi  + \theta^2)
\sim |\xi| \Big\{\theta \Big(\frac{|\xi_1|^2 +|\xi_2|^2}{|\xi|^2}\Big)^{1/2} \cos \phi  + \theta^2\Big\}
\] for
sufficiently small $\gamma$ and $\theta$, and we have
\begin{align*}
\int_{\SS^2}b\Big(\frac{\xi}{|\xi|}\cdot \sigma\Big) (1 - |\psi_0(\xi^-)| ) d\sigma
\gtrsim \kappa \int_{\cA} \theta^{-1-2s} d\theta d \phi\,,
\end{align*}
where $\cA = \{(\theta, \phi) \,;\, \xi^- \cdot \ve_3 \in [a_1, a_2]\}$.
However, this degenerate coercivity estimate is not sufficient to
show the smoothing effect because the continuity in $\psi(t,\xi)$
does not imply \eqref{usual-deg} with $\psi_0(\xi^-)$ replaced by $\psi(t,\xi^-)$.

\section{Appendix}\label{ap}
\setcounter{equation}{0}
In this appendix we {first} recall the result given in \cite{Cannone-Karch,morimoto-12}, 
and prove the continuity of $\pa_t \psi(t,\xi)$.
For this, assume 
\begin{align}\label{index-sing}
\exists \alpha_0 \in (0, 2]\enskip\mbox{ such that} \enskip  (\sin \theta/2)^{\alpha_0} b(\cos \theta) 
\sin \theta  \in L^1((0, \pi/2]),
\end{align}
which is fulfilled for $b(\cos \theta)$ with \eqref{1.2} if $2s < \alpha_0$.
As stated in the proof of Theorem \ref{main-thm} in the introduction,  
it follows from the Bobylev formula that the Cauchy problem \eqref{bol}-\eqref{initial}
is reduced to \eqref{c-p-fourier}, if 
$\psi_0(\xi) = \int_{\RR^3}e^{-iv\cdot \xi} dF_0(v)$ and $\psi(t,v)$ denotes
the Fourier transform of the probability measure solution. 


\begin{theo}\label{remark-ck}
Assume that $b(\cos \theta)$ 
satisfies \eqref{index-sing} for some $\alpha_0 \in (0,2]$.
Then for each $\alpha \in [\alpha_0, 2]$ and every $\psi_0 \in \cK^\alpha$ there
exists a classical solution $\psi \in 
C([0,\infty), \cK^\alpha)$ of the Cauchy problem \eqref{c-p-fourier}. The solution
is unique in the space $C([0,\infty), \cK^{\alpha_0})$. Furthermore, if $\alpha \in [\alpha_0, 2]$ and if
$\psi(t,\xi)$, $\varphi(t,\xi)$ $\in C([0,\infty), \cK^{\alpha})$ are two solutions to the Cauchy problem
\eqref{c-p-fourier} with initial data $\psi_0, \varphi_0 \in \cK^{\alpha}$, respectively, then 
for any $t >0$ we have
\begin{equation}\label{stability-0}
\|\psi(t) -\varphi(t)\|_\alpha \le e^{\lambda_\alpha t} \|\psi_0 -\varphi_0\|_\alpha ,
\end{equation}
where
\begin{equation}\label{constant-imp}
 \lambda_\alpha 
=2\pi \int_0^{\pi/2} b(\cos \theta)
\{\cos^\alpha \frac{\theta}{2 }+ \sin^\alpha \frac{\theta}{2}-1 \}
\sin \theta d \theta\,.
\end{equation}
Furthermore, $\partial_t \psi(t,\xi)$ is continuous in $[0,\infty)\times \RR^3$.
\end{theo}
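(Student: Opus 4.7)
The existence of $\psi\in C([0,\infty),\cK^\alpha)$, uniqueness in $C([0,\infty),\cK^{\alpha_0})$, and the stability estimate \eqref{stability-0}--\eqref{constant-imp} are, modulo the symmetrization convention for the cross section, the content of the main theorems of \cite{Cannone-Karch, morimoto-12}; the explicit constant $\lambda_\alpha$ comes from the elementary identity $\int_{\SS^2} b(\cos\theta)\{\cos^\alpha(\theta/2)+\sin^\alpha(\theta/2)-1\}\,d\sigma = \lambda_\alpha$ combined with a Gronwall argument applied to $\|\psi(t)-\varphi(t)\|_\alpha$. My plan is to cite those works directly. The new content requiring proof is the joint continuity of $\pa_t\psi(t,\xi)$ on $[0,\infty)\times\RR^3$, and I focus on that.

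Using $\psi(t,0)=1$, the Bobylev equation \eqref{c-p-fourier} becomes $\pa_t\psi(t,\xi)=K[\psi](t,\xi):=\int_{\SS^2} b(\xi\cdot\sigma/|\xi|)(\psi(t,\xi^+)\psi(t,\xi^-)-\psi(t,\xi))\,d\sigma$. For each $\sigma$ with $\theta>0$ the integrand is continuous in $(t,\xi)$ by $\psi\in C([0,\infty),\cK^\alpha)$, so my plan is to apply dominated convergence; the task reduces to producing an integrable majorant on compact subsets of $[0,T]\times\RR^3$. I would decompose
\[
\psi^+\psi^- -\psi \;=\; \psi^+(\psi^- -1) \;+\; (\psi^+-\psi).
\]
For the first piece, $|\psi^+|\le 1$ and $\psi(t,\cdot)\in\cK^\alpha$ give $|\psi^+(\psi^- -1)|\le \|\psi(t)-1\|_\alpha |\xi|^\alpha\sin^\alpha(\theta/2)$, with $\|\psi(t)-1\|_\alpha$ bounded on $[0,T]$ by \eqref{stability-0} applied with $\varphi\equiv 1$ (a stationary solution, being the Fourier transform of $\delta_0$). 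Since $\alpha\ge\alpha_0$, assumption \eqref{index-sing} then makes $\sin^\alpha(\theta/2)\,b(\cos\theta)\sin\theta$ integrable near $\theta=0$, supplying a majorant.

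For the second piece $(\psi^+-\psi)$, which is where the main obstacle lies, the naive characteristic-function estimate $|\psi^+-\psi|^2\le 2|1-\psi(-\xi^-)|$ loses a square root and is insufficient when $\alpha\le 4s$. Instead, my plan is to extract cancellation first by writing $\psi(t,\cdot)=\widehat{f(t,\cdot)}$ and swapping orders of integration,
\[
\int_{\SS^2} b(\cos\theta)(\psi^+-\psi)\,d\sigma = \int_{\RR^3} e^{-iv\cdot\xi}\Phi(v,\xi)\,df(t,v), \quad \Phi(v,\xi):=\int_{\SS^2} b(\cos\theta)(e^{iv\cdot\xi^-}-1)\,d\sigma,
\]
and then bounding $\Phi$ by azimuthal symmetrization in the spirit of the cancellation lemma of \cite{ADVW}: the $\phi$-average of $iv\cdot\xi^-$ equals $iv\cdot\xi\sin^2(\theta/2)$, which integrates against $b(\cos\theta)\sin\theta$ because $\sin^2(\theta/2)\theta^{-1-2s}$ does, and a split at $\theta_0\simeq(|v||\xi|)^{-1}$ handles the nonlinear remainder. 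This should yield a bound $|\Phi(v,\xi)|\lesssim (1+|v|^\beta)|\xi|^\beta$ for some $\beta\in(2s,\alpha)$, which is then integrable against $df(t,v)$ since $\psi(t,\cdot)\in\cK^\alpha$ with $\alpha>2s$ implies $\int|v|^\beta\,df(t,v)<\infty$ for every $\beta<\alpha$, uniformly in $t\in[0,T]$ by \eqref{stability-0}. Dominated convergence then gives continuity of $\pa_t\psi$. The main obstacle is precisely this: for data in $\tilde P_\alpha\setminus P_\alpha$ no direct moment bound is available, so the sharp estimate on $(\psi^+-\psi)$ must come from the combination of azimuthal cancellation and the implicit moment information carried by $\psi\in\cK^\alpha$.
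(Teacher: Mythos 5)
Your reduction of the theorem to the joint continuity of $\pa_t\psi$ matches the paper exactly (existence, uniqueness and \eqref{stability-0} are indeed quoted from \cite{Cannone-Karch,morimoto-12}), and your first piece $\psi^+(\psi^- -1)$ is the paper's $I_3$, handled the same way. The gap is in the treatment of $\psi^+-\psi$. Your target bound $|\Phi(v,\xi)|\lesssim(1+|v|^\beta)|\xi|^\beta$ for some $\beta\in(2s,\alpha)$ is not established, and the route sketched fails on part of the range the theorem covers. First, the azimuthal average of the linear term gives $\int_{\SS^2}b\,(iv\cdot\xi)\sin^2(\theta/2)\,d\sigma\sim i\,(v\cdot\xi)$, which is linear in $|v|$; pairing this with $dF_t$ requires a finite first moment, unavailable when $\alpha\le 1$. (Moreover, since $\cK^\alpha$ is strictly larger than $\widehat{P_\alpha}$, even your assertion $\int|v|^\beta dF_t<\infty$ for $\beta<\alpha$ needs a Tauberian/truncation proof, and the passage back from the $v$-integral to a dominated-convergence argument in $(t,\xi)$ needs its own justification.) One must instead keep $e^{iv\cdot\xi\sin^2(\theta/2)}-1$ intact and interpolate, $\le\min(2,|v||\xi|\sin^2(\theta/2))\lesssim(|v||\xi|)^{\gamma}\sin^{2\gamma}(\theta/2)$. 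Second, and more seriously, the theorem is stated under \eqref{index-sing}, not the power law \eqref{1.2}: the split at $\theta_0\simeq(|v||\xi|)^{-1}$ produces a power $(|v||\xi|)^{\gamma}$ where angular integrability of $\sin^{\gamma}(\theta/2)\,b(\cos\theta)\sin\theta$ forces $\gamma\ge\alpha_0$, while finiteness of the $v$-integral forces $\gamma<\alpha$. At the admissible endpoint $\alpha=\alpha_0$ this window is empty, so the argument breaks down exactly where the hypothesis is weakest.

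The paper realizes the same cancellation without ever leaving Fourier space, which is why it works for all $\alpha\in[\alpha_0,2]$ with no moment input. Let $\zeta=\big(\xi^+\cdot\frac{\xi}{|\xi|}\big)\frac{\xi}{|\xi|}$ and let $\tilde\xi^+$ be the reflection of $\xi^+$ across $\RR\xi$; since $b$ depends only on $\theta$, one may replace $\int b\,\psi(\xi^+)\,d\sigma$ by $\frac12\int b\,(\psi(\xi^+)+\psi(\tilde\xi^+))\,d\sigma$ and split $\psi^+-\psi$ into the second difference $\frac12(\psi(\xi^+)+\psi(\tilde\xi^+)-2\psi(\zeta))$ plus the radial term $\psi(\zeta)-\psi(\xi)$. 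The second difference satisfies
\begin{equation*}
\big|\psi(t,\xi^+)+\psi(t,\tilde\xi^+)-2\psi(t,\zeta)\big|\le 2-\psi(t,\eta^+)-\psi(t,-\eta^+)\le 2\|1-\psi(t)\|_\alpha|\eta^+|^\alpha,\qquad |\eta^+|=|\xi^+|\sin\tfrac{\theta}{2},
\end{equation*}
so the first-order displacement is paid for with the full power $\alpha\ge\alpha_0$, while the radial term is controlled by \eqref{hoelder} with $|\zeta-\xi|=|\xi|\sin^2(\theta/2)$, whose quadratic smallness exactly absorbs the square-root loss. If you wish to keep your measure-side computation, you must restrict to \eqref{1.2} with $2s<\alpha$, prove the uniform moment bound $\sup_{t\le T}\int|v|^\beta dF_t<\infty$ for $\beta<\alpha$, and accept a bound of the form $|\Phi(v,\xi)|\lesssim|\xi|\min(1,|v||\xi|)+(|v||\xi|)^{2s}{\bf 1}_{|v||\xi|\ge1}$ rather than a single power $\beta$; the Fourier-side second-difference argument is both shorter and covers the stated generality.
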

The assumption \eqref{index-sing} with $\alpha = \alpha_0$ can be written as
\begin{equation}\label{integrability-1}
(1-\tau)^{\alpha_0/2}b(\tau) \in L^1([0,1))\,,
\end{equation}
by the change of variable $\tau = \cos \theta$. 
Theorem \ref{remark-ck} ameliorates Theorem 2.2 of \cite{Cannone-Karch}, where \eqref{integrability-1}
is  assumed with $\alpha_0/2$ replaced by $\alpha_0/4$, see (2.6) of \cite{Cannone-Karch}.
In what follows, we only prove the last statement of Theorem \ref{remark-ck} because other parts
are  already given in \cite{morimoto-12}.

\begin{proof}[Proof of the continuity of $\pa_t(t,\xi)$]
If we put  $\zeta = \left(\xi^+ \cdot \frac{\xi}{|\xi|}\right) \frac{\xi}{|\xi|}$ and 
consider $\tilde \xi^+ = \zeta -(\xi^+-\zeta)$, 
(which is
symmetric to $\xi^+$ on $\SS^2$, see Figure 6) as in \cite{morimoto-12}, then 
the first equation of \eqref{c-p-fourier} can be written as
\begin{align}\label{decompo}
\pa_t\psi(t,\xi)
&= \frac{1}{2}\int_{\SS^2}b\left(\frac{\xi \cdot \sigma}{|\xi|}\right) 
\Big( \psi(t,\xi^+) + \psi(t, \tilde \xi^+) - 2 \psi(t,\zeta) \Big)d\sigma  \notag \\
&\quad +\int_{\SS^2}b\left(\frac{\xi \cdot \sigma}{|\xi|}\right)
\Big(\psi(t, \zeta) - \psi(t,\xi) \Big) d\sigma  \\
&\quad + \int_{\SS^2}b\left(\frac{\xi \cdot \sigma}{|\xi|}\right) \psi(t,\xi^+)
\Big(\psi(t,\xi^-) -1\Big) d\sigma \notag \\
& = I_1(t,\xi) + I_2(t,\xi) + I_3(t,\xi)\,. \notag
\end{align}
Putting $\eta^+ = \xi^+ - \zeta$, we have, under the notation
$d F_t(v) = f(t,v)dv$,
\begin{align*}
&|\psi(t, \xi^+) + \psi(t,\tilde \xi^+) - 2 \psi(t,\zeta) |=\left
|\int_{\RR^3} e^{-i\zeta \cdot v}\Big( e^{-i \eta^+ \cdot v} + e^{ i \eta^+\cdot v}  - 2\Big)d F_t(v) \right|\\
&\qquad \le \int_{\RR^3}
|e^{-i \zeta \cdot v}| \Big( 2 -e^{-i\eta^+ \cdot v} - e^{i\eta^+ \cdot v}\Big) dF_t(v)\\
&\qquad = 2 - \psi(t, \eta^+) - \psi(t, -\eta^+)\\
&\qquad \le 2\|1-\psi(t)\|_\alpha |\eta^+|^\alpha \le  
2 e^{\lambda_\alpha t} \|1-\psi_0\|_\alpha \Big( |\xi| \sin(\theta/2)\Big)^\alpha\,,
\end{align*}
because $|\eta^+| = |\xi^+| \sin(\theta/2)$ and \eqref{stability-0} with
$\varphi_0 =\varphi(t) =1$.
\begin{figure}\label{f-6}
\begin{center}
\includegraphics[width=2.5in]{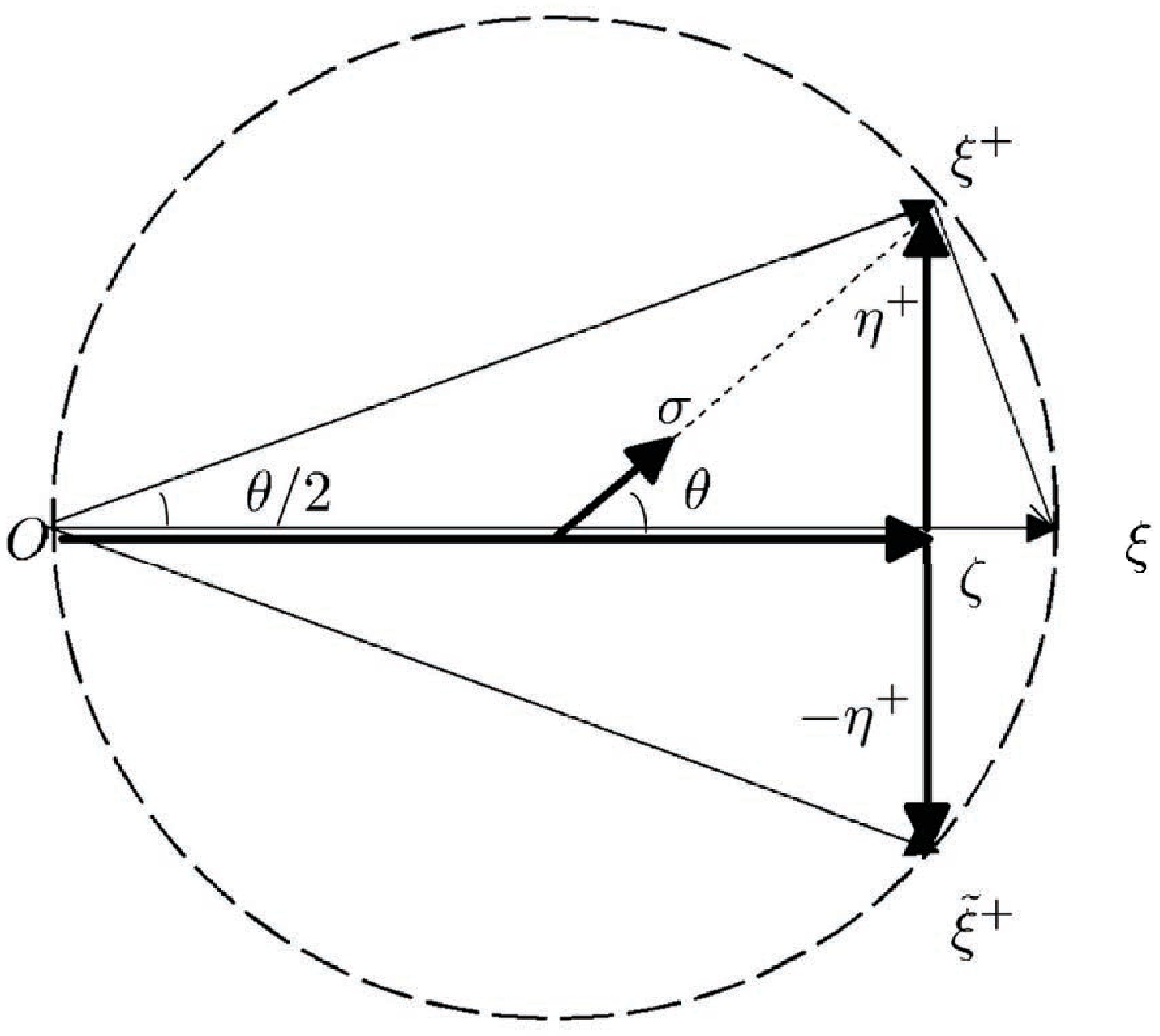}
\caption{$\cos \theta = \frac{\xi}{|\xi|} \cdot \sigma$, \enskip $\eta^+ =\xi^+ -\zeta$}
\end{center}
\end{figure}
Hence 
\[
|I_1(t,\xi)| \le 4 \pi e^{\lambda_\alpha t}
 \|1-\psi_0\|_\alpha |\xi|^\alpha \int_0^{\pi/2}\sin^\alpha (\theta/2) b(\cos \theta)\sin \theta 
d \theta\,,
\]
which together with the Lebesgue convergence theorem shows
\[
\lim_{(t,\xi) \rightarrow (t_0,\xi_0)}
I_1(t,\xi) = I_1(t_0,\xi_0)\,.
\]
In order to show similar estimates hold 
for $I_2, I_3$, we recall (19) of Lemma 2.1 in 
\cite{morimoto-12}, that is, the fact that if $\varphi \in \cK^\alpha$ then we have
\begin{align}\label{hoelder}
|\varphi(\xi) - \varphi(\xi+\eta) |\le \|\varphi -1\|_\alpha\Big(4|\xi|^{\alpha/2}|\eta|^{\alpha/2} +|\eta|^\alpha\Big)\enskip \mbox{for all $\xi, \eta \in \RR^3$}.
\end{align}
Thanks to this with $\eta = \zeta -\xi$,  
\[
|I_2(t,\xi)| \le 10 \pi  e^{\lambda_\alpha t} \|1-\psi_0\|_\alpha |\xi|^\alpha \int_0^{\pi/2}\sin^\alpha (\theta/2) b(\cos \theta)\sin \theta 
d \theta\,,
\]
because $|\zeta -\xi| = |\xi|\sin^2(\theta/2)$. 
Note that similar estimate holds for $I_3$.  Hence, we obtain 
the continuity of $\pa_t \psi(t,\xi)$. 
\end{proof}

{
\begin{prop}\label{energy-con}
Assume that $b(\cos \theta)$ 
satisfies \eqref{index-sing} for some $\alpha_0 \in (0,2]$.
If $F_0 \in P_2(\RR^3)$ then the unique measure solution $F_t(v)
\in C([0,\infty), \tilde P_2)$ belongs to $P_2(\RR^3)$ for each $t>0$, more precisely,
\begin{equation}\label{energy}
\int |v|^2 dF_t(v) \le \int |v|^2 dF_0(v). 
\end{equation}
Furthermore, if $\alpha_0 \le 1$ then the equality holds,
that is, the energy is conserved. 
\end{prop}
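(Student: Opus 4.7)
The plan is to establish the energy bound by approximating $F_0$ with smooth data, using classical energy conservation on the regularized problem, and then passing to the limit with the stability from Theorem \ref{remark-ck}. Concretely, I would regularize by setting $F_0^\delta = F_0 \ast G_\delta$ with $G_\delta$ a Gaussian of variance $\delta^2$, so that $F_0^\delta$ is a smooth probability density in $L^1_k \cap L^\infty$ for every $k$, its Fourier transform is $\psi_0^\delta(\xi) = \psi_0(\xi)e^{-\delta^2|\xi|^2/2}$, and $\int|v|^2 dF_0^\delta(v) = \int|v|^2 dF_0(v) + 3\delta^2$. For such smooth, rapidly decaying data the classical Cauchy theory for the non-cutoff Maxwellian homogeneous Boltzmann equation produces a solution $F_t^\delta \in C([0,\infty),L^1_2)$, and the weak formulation applied to the collisional invariant $|v|^2$ yields energy conservation $\int|v|^2 dF_t^\delta(v) = \int|v|^2 dF_0^\delta(v)$ for all $t\ge 0$.

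To pass to the limit $\delta \to 0$ I use the elementary bound $\|\psi_0^\delta - \psi_0\|_{\alpha_0} \lesssim \delta^{\alpha_0}$, obtained by splitting the supremum at $|\xi|=\delta^{-1}$ and using $|e^{-\delta^2|\xi|^2/2}-1|\le \min(1,\delta^2|\xi|^2/2)$, which together with the stability \eqref{stability-0} gives $\|\psi^\delta(t) - \psi(t)\|_{\alpha_0} \to 0$ for every fixed $t$. In particular $\psi^\delta(t,\xi) \to \psi(t,\xi)$ pointwise, so by L\'evy's continuity theorem the probability measures $F_t^\delta$ converge weakly to $F_t$. Applying Portmanteau lower semicontinuity to the nonnegative continuous function $|v|^2$ now gives
\begin{equation*}
\int|v|^2 dF_t(v) \le \liminf_{\delta \to 0}\int|v|^2 dF_t^\delta(v) = \lim_{\delta \to 0}\int|v|^2 dF_0^\delta(v) = \int|v|^2 dF_0(v),
\end{equation*}
which simultaneously places $F_t$ in $P_2(\RR^3)$ and establishes \eqref{energy}. (If $F_0$ has nonzero momentum and $\alpha_0>1$, a prior Galilean translation reduces matters to the zero-momentum case where $\psi_0 \in \cK^{\alpha_0}$ is guaranteed by Lemma \ref{measure-positive}.)

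For the equality assertion under $\alpha_0 \le 1$, I would upgrade the above lower semicontinuity to genuine convergence of second moments, which reduces to proving uniform integrability of $|v|^2$ against $\{F_t^\delta\}_\delta$. The natural input is a Povzner-type moment-propagation bound: in the mild-singularity range $\alpha_0 \le 1$ the integrability $\int_0^{\pi/2}\theta\, b(\cos\theta)\sin\theta\, d\theta < \infty$ is at our disposal, and this is exactly the angular integrability driving the classical moment propagation for Maxwellian molecules; it yields, uniformly in $\delta$, a bound on $\int|v|^{2+\eta}dF_t^\delta$ for some small $\eta>0$, with the corresponding initial moment finite thanks to the Gaussian mollification. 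This uniform higher moment prevents escape of mass to infinity in the weak limit, so $\int|v|^2 dF_t^\delta \to \int|v|^2 dF_t$ and the reverse inequality follows. The Povzner step is the main obstacle; the inequality part is essentially soft, and the hypothesis $\alpha_0 \le 1$ enters only to secure propagation of a slightly higher moment uniformly in the approximation parameter.
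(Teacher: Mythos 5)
Your approach is genuinely different from the paper's: you regularize the \emph{initial datum} (Gaussian mollification, $F_0^\delta = F_0 * G_\delta$) while keeping the singular kernel, whereas the paper regularizes the \emph{kernel} ($b_n = \min\{b,n\}$, citing Lemma 2.2 of \cite{PT} for energy conservation of the cutoff problem) while keeping the measure-valued datum. Your limit mechanism on the Fourier side — the elementary bound $\|\psi_0^\delta - \psi_0\|_{\alpha_0}\lesssim \delta^{\alpha_0}$, stability \eqref{stability-0}, L\'evy continuity, and Portmanteau lower semicontinuity of $\int|v|^2$ — is clean and correct. The paper instead uses equicontinuity and Ascoli--Arzel\`a followed by testing against truncations $|v|^2\chi(v/m)$, which is morally the same passage.

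There is, however, a substantive gap in your step asserting that ``for such smooth, rapidly decaying data the classical Cauchy theory for the \emph{non-cutoff} Maxwellian homogeneous Boltzmann equation\ldots yields energy conservation.'' For cutoff kernels this is elementary and is exactly what the paper uses; for non-cutoff kernels it is a theorem in its own right. Making the weak formulation against $\varphi=|v|^2$ rigorous for singular $b$ — so that the exact cancellation of the collisional invariant can be exploited — requires a Fubini/dominated-convergence justification at the level of iterated singular integrals, and in spirit this is as deep as the conclusion you are trying to prove. You would need to cite a precise result (e.g.\ for $L^1_2\cap L^\infty$ data the non-cutoff Maxwellian equation has a unique solution with constant energy, and that it coincides with the $\tilde P_2$-solution of Theorem~\ref{remark-ck}); the paper's route via kernel truncation exists precisely to avoid this circularity, since for $b_n$ the conservation is genuinely classical.

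A second gap appears in the equality part ($\alpha_0\le 1$). You propose to upgrade weak lower semicontinuity to actual convergence of second moments via a uniform-in-$\delta$ bound on $\int|v|^{2+\eta}\,dF_t^\delta$. But for Maxwellian molecules moments \emph{propagate} rather than get created, so the best one gets is $\int|v|^{2+\eta}\,dF_t^\delta \lesssim_t \int|v|^{2+\eta}\,dF_0^\delta$, and the right-hand side diverges as $\delta\to0$ because $F_0$ is only assumed to lie in $P_2$. The mollification makes each $F_0^\delta$ smooth, but its $(2+\eta)$-th moment is not uniformly bounded in $\delta$, so this path to uniform integrability does not close. The paper instead invokes Theorem~2 of \cite{lu-wennberg}, which delivers the reverse inequality directly for the mild-singularity case without passing through higher moments.
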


\begin{proof}
As a standard practice, we consider the increasing sequence of bounded collision kernels
\begin{equation}\label{cutoff}
b_n(\cos \theta) = \min \{ b(\cos \theta), n\}
\end{equation}
and denote by $\psi_n(t,\xi)$ the solution in 
$C([0,\infty); \cK_2)$ to the Cauchy problem 
\eqref{c-p-fourier} with $b$ replaced by the cutoff $b_n$, for the same initial datum
$\psi_0(\xi) = \int e^{-iv\cdot \xi} dF_0(v)$.  
It follows from Lemma 2.2 of \cite{PT} that
\begin{align}\label{cut-off-ene}
\int |v|^2 dF_t^{(n)}(v) = \int |v|^2 dF_0(v)\,,
\end{align}
where $F_t^{(n)} = \cF^{-1} \psi_n(t,\cdot)$. As proven in
\cite{PT, Cannone-Karch, morimoto-12}, we have 
the equi-continuity of $\{\psi_n(t,\xi)\}$ on $[0,\infty) \times \{|\xi| \le R\}$ 
for any fixed $R>0$. Since $|\psi_n| \le 1$, the Ascoli-Arzel\'a theorem gives a 
convergent subsequence $\{\psi_{n_k}\}_{k=1}^\infty$ and the solution 
$\psi = \lim_{k \rightarrow \infty} \psi_{n_k}$. Take a $\chi(v)$ in $C_0^\infty (\RR^3)$ satisfying 
$0\le \chi \le 1$ and 
$\chi = 1$ on $\{|v| \le 1\}$. Since $\psi_{n_k}(t) \rightarrow \psi(t)$ in $\cS'(\RR^3)$ for
each $t >0$, it follows from \eqref{cut-off-ene} that for any $m \in \NN$
\[
\int  |v|^2 \chi\Big(\frac{v}{m}\Big) dF_t (v) = \lim_{k \rightarrow \infty}
\int  |v|^2 \chi\Big(\frac{v}{m}\Big) dF^{(n_k)}_t (v) \le  \int |v|^2 dF_0(v).
\]
Letting $m \rightarrow \infty$ we obtain \eqref{energy}.
In the mild singularity case, $\alpha_0 \le 1$, we can use Theorem 2 of 
\cite{lu-wennberg} and its proof to show the reverse inequality of \eqref{energy}.
\end{proof}

In the rest of this appendix, we consider the case when $b(\cos \theta)$ satisfies \eqref{1.2} with $0<s <1/2$. 
We first prove the propagation of the moment as follows.

\begin{prop}\label{propagation-moments}
Let $\alpha \ge 1$ and $b(\cos \theta)$ satisfy \eqref{1.2} with $0<s<1/2$. If the initial data $F_0 \in P_\alpha$, then 
the unique solution $f(t,v)$ belongs to $ P_\alpha$ for any $t>0$. More 
precisely, there exists a constant $C >0$ independent of $t$ such that
\begin{align}\label{prop-moment}
\int \la v \ra^\alpha f(t,v) dv \le C e^{Ct} \int\la v \ra^\alpha  dF_0(v)\,.
\end{align}
\end{prop}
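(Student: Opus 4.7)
The plan is to adapt the regularization argument of Proposition \ref{energy-con} and combine it with a Povzner-type moment bound that exploits the mild singularity hypothesis $0<s<1/2$. I would first introduce the truncated kernels $b_n(\cos\theta) = \min\{b(\cos\theta),n\}$ as in \eqref{cutoff} and denote by $f_n(t,v)$ the corresponding classical solutions with initial datum $F_0$. For these bounded-kernel solutions a standard iterative cutoff argument (testing first against $\min(\langle v\rangle^\alpha, M)$, then letting $M\to\infty$ by monotone convergence) gives access to the weak formulation
\begin{align*}
\frac{d}{dt} M_\alpha^{(n)}(t) = \frac{1}{2}\int_{\RR^3}\int_{\RR^3}\int_{\SS^2} b_n(\cos\theta)\,\Delta\varphi(v,v_*,\sigma)\, d\sigma\, dF_t^{(n)}(v)\, dF_t^{(n)}(v_*),
\end{align*}
where $M_\alpha^{(n)}(t) = \int\langle v\rangle^\alpha dF_t^{(n)}(v)$ and $\Delta\varphi = \varphi(v')+\varphi(v'_*)-\varphi(v)-\varphi(v_*)$ with $\varphi(v) = \langle v\rangle^\alpha$.

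The core estimate is the elementary Povzner-type bound
\begin{align*}
|\Delta\varphi(v,v_*,\sigma)| \le C_\alpha \sin(\theta/2)\, |v-v_*|\bigl(\langle v\rangle^{\alpha-1} + \langle v_*\rangle^{\alpha-1}\bigr),
\end{align*}
which I would derive by applying the mean value theorem separately to $\varphi(v')-\varphi(v)$ and $\varphi(v'_*)-\varphi(v_*)$, using $|v'-v| = |v'_*-v_*| = |v-v_*|\sin(\theta/2)$ together with the energy-conservation bound $|v'|,|v'_*|\le |v|+|v_*|$ to keep the intermediate points of the mean value theorem under the control of $\langle v\rangle + \langle v_*\rangle$. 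A Young inequality gives $|v-v_*|(\langle v\rangle^{\alpha-1}+\langle v_*\rangle^{\alpha-1}) \le C_\alpha(\langle v\rangle^\alpha + \langle v_*\rangle^\alpha)$, and after using mass conservation $\int dF_t^{(n)} = 1$ one obtains
\begin{align*}
\frac{d}{dt} M_\alpha^{(n)}(t) \le C_\alpha \Bigl(\int_0^{\pi/2} b(\cos\theta)\sin(\theta/2)\sin\theta\, d\theta\Bigr) M_\alpha^{(n)}(t).
\end{align*}
The angular integral is finite precisely because $s<1/2$, so Gronwall yields $M_\alpha^{(n)}(t)\le Ce^{Ct} M_\alpha(0)$ uniformly in $n$.

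The concluding step mirrors the passage-to-the-limit used in Proposition \ref{energy-con}: equicontinuity of $\{\psi_n(t,\xi)\}$ on compact subsets of $[0,\infty)\times\RR^3$ together with Ascoli--Arzel\`a produces a subsequence $\psi_{n_k}\to\psi$, hence $F_t^{(n_k)}\to F_t$ in $\cS'(\RR^3)$; testing the uniform inequality $\int\langle v\rangle^\alpha\chi(v/m)\,dF_t^{(n_k)}\le Ce^{Ct}M_\alpha(0)$ against a smooth cutoff $\chi$ as in Proposition \ref{energy-con}, and then letting $m\to\infty$ by monotone convergence, yields \eqref{prop-moment}.

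The step I expect to be the main obstacle is the sharpness of the Povzner bound: the linear factor $\sin(\theta/2)$ is exactly what matches the mild-singularity hypothesis $s<1/2$, since a crude $O(1)$ bound would be useless while a sharper $O(\sin^2(\theta/2))$ estimate would demand exploiting the cancellation of the first-order terms in $\Delta\varphi$ via angular averaging over $\sigma$. The present setting avoids this refinement, but the bound must be tracked with care, and the initial justification of the weak form for the unbounded test function $\langle v\rangle^\alpha$ at the level of the regularized kernel must be handled by the iterative $M$-cutoff outlined above. The remaining cutoff-and-limit machinery is a direct transcription of what has already been developed in the Appendix.
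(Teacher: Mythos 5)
Your proposal is correct and follows essentially the same route as the paper: truncate the kernel to $b_n$, establish qualitative finiteness of the $\alpha$-moment for the cutoff solutions via a bounded truncation of the weight, then use the Povzner-type bound $|\langle v'\rangle^\alpha-\langle v\rangle^\alpha|\lesssim(\langle v\rangle^{\alpha-1}+\langle v_*\rangle^{\alpha-1})|v-v_*|\sin(\theta/2)$ whose linear $\theta$-factor is integrable against $b$ precisely because $s<1/2$, apply Gronwall uniformly in $n$, and pass to the limit through $\chi(v/m)$ and convergence in $\mathcal{S}'$. The only cosmetic difference is that you truncate the weight by $\min(\langle v\rangle^\alpha,M)$ whereas the paper uses $W_\delta(v)=\langle v\rangle^\alpha/(1+\delta\langle v\rangle^\alpha)$; the two devices play exactly the same role.
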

\begin{proof}
For the simplicity of the notations, we consider the case where $F_0$ has a density function $f_0(v)$. 
For $\delta >0$, put 
\[W_\delta(v) = \frac{\la v \ra^\alpha}{1+\delta \la v\ra^\alpha}\,.
\]
Since $x/(1+\delta x)$ is increasing in $[1,\infty]$ and $|v'| \le |v| +|v_*|$,  we have
\begin{align*}
W_{\delta}(v') &\lesssim \frac{\la v\ra^\alpha + \la v_*\ra^\alpha}{1+\delta (\la v\ra^\alpha + \la v_*\ra^\alpha)}
= \frac{\la v\ra^\alpha }{1+\delta (\la v\ra^\alpha + \la v_*\ra^\alpha)}
+\frac{ \la v_*\ra^\alpha}{1+\delta (\la v\ra^\alpha + \la v_*\ra^\alpha)}\\
&\le \frac{\la v\ra^\alpha }{1+\delta \la v\ra^\alpha }
+\frac{ \la v_*\ra^\alpha}{1+\delta \la v_*\ra^\alpha} = W_{\delta}(v)+W_{\delta}(v_*)\,.
\end{align*}
Therefore,
$|W_{\delta}(v') - W_{\delta}(v)| \lesssim W_{\delta}(v)+W_{\delta}(v_*) $. If $b_n$ is the same as in  \eqref{cutoff} and $f^n(t,v)$ is a
unique solution of the corresponding Cauchy problem, then 
we have
\begin{align*}
\frac{d }{dt} \int f^n(t,v)W_{\delta}(v) dv \lesssim 
\Big(\int b_n d\sigma \Big) \Big(\int f^n(t,v)W_{\delta}(v) dv \Big) \Big(\int f^n(t,v_*)dv_* \Big)\,,
\end{align*}
which shows 
\[
\int f^n(t,v)W_{\delta}(v) dv \le C_n e^{C_n t} \int f_0(v)W_{\delta}(v) dv.
\]
Taking the limit $\delta \rightarrow +0$, we have $f^n(t,v) \in L^1_\alpha$. 
Note that
\[
\la v' \ra^\alpha - \la v \ra^\alpha \lesssim \big(\la v  \ra^{\alpha-1} +  \la v_*  \ra^{\alpha-1}\big)
|v-v_*|\sin \frac{\theta}{2}.\]
There exists a $C >0$ independent of $n$ such that 
\begin{align*}
\frac{d }{dt} \int f^n(t,v)\la v \ra^\alpha  dv &\le 
\Big(\int b_n \theta d\sigma \Big) \Big(\int f^n(t,v) \la v \ra^\alpha dv \Big) \Big(\int f^n(t,v_*)dv_* \Big)\\
&\le C \int f^n(t,v) \la v \ra^\alpha dv\,.
\end{align*}
We have 
$\int f^n(t,v)\la v \ra^\alpha  dv \le C e^{Ct} \int f_0(v)\la v \ra^\alpha  dv$.
For the same  $\chi(v)$ defined in the proof of Proposition \ref{energy-con},  we have 
$$\int f^n(t,v)\la v \ra^\alpha\chi\left(\frac{v}{m}\right)   dv \le C e^{Ct} \int f_0(v)\la v \ra^\alpha  dv. $$
Since $f^n(t,v)\rightarrow f(t,v)$ in $\cS'(\RR^3_v)$ and $\la v \ra^\alpha \chi\left(\frac{v}{m}\right)  \in \cS$, we get
 $$\int f(t,v)\la v \ra^\alpha \chi\left(\frac{v}{m}\right)     dv \le C e^{Ct} \int f_0(v)\la v \ra^\alpha  dv. $$
 Letting $m \rightarrow  \infty$ gives the desired estimate \eqref{prop-moment}. 
 \end{proof}

We now turn to  show the  boundedness of the entropy of the solution for the initial data $ f_0 \in L^1_{1}(\RR^3) \cap L \log L(\RR^3)$ in the mild singularity case
when $0<s<\frac 12$, by using the equivalence between
the metric $\|\cdot \|_\alpha$ and the weak $*$  topology,  
inspired by the proof of Theorem 1 in \cite{toscani-villani}. 
\begin{prop}\label{entropy-finite}
Let $b(\cos \theta)$ satisfy \eqref{1.2} with $0<s<1/2$. Assume that $0 \le f_0 \in L^1_1(\RR^3)  \cap  L \log L(\RR^3)$ and $\int f_0(v) dv =1$. 
If $f(t,v)$ is a unique solution in $C([0,\infty), \tilde P_1)$   of the Cauchy problem  \eqref{bol}-\eqref{initial}, then $f(t,v) \in L^1_1(\RR^3)  \cap  L \log L(\RR^3)$ for any $t >0$. 
Moreover, for any $T_1 >0$ there exists a $C_{T_1} >0$ such that 
\begin{align}\label{boundedness-234}
\sup_{0< t \le T_1} \int \la v \ra f(t,v)   dv + \int f(t,v) \log \big( 1 +  f(t,v) \big) dv \le C_{T_1}\,.
\end{align}
\end{prop}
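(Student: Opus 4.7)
The first half of~\eqref{boundedness-234}, the uniform boundedness of $\int\la v\ra f(t,v)\, dv$ on $[0,T_1]$, follows at once from Proposition~\ref{propagation-moments} with $\a=1$, which applies because $f_0\in L^1_1(\RR^3)$ and $0<s<1/2$. So the substantive task is to propagate the $L\log L$ bound.

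The plan is to run the Grad cutoff approximation used in Proposition~\ref{energy-con}. Let $b_n$ be as in~\eqref{cutoff} and let $f^n\in C([0,\infty); L^1_1\cap L\log L)$ be the unique Grad-cutoff solution of the Boltzmann equation with kernel $b_n$ and initial datum $f_0$, with Fourier transform $\psi_n$. Two uniform-in-$n$ bounds on $[0,T_1]$ will be established. First, tracking the proof of Proposition~\ref{propagation-moments} with $\a=1$ yields
\[
\sup_{0\le t\le T_1}\|f^n(t)\|_{L^1_1} \le C_{T_1}\,\|f_0\|_{L^1_1},
\]
the constant being independent of $n$ because $\int b_n(\cos\theta)\,\theta\sin\theta\, d\theta\le \int b(\cos\theta)\,\theta\sin\theta\, d\theta <\infty$ in the mild singularity regime. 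Second, the classical $H$-theorem for bounded kernels furnishes $H(f^n(t))\le H(f_0)$, where $H(g):=\int g\log g\, dv$.

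These are combined through a standard control of the negative part of the entropy. Using $-x\log x\le \varepsilon^{-1}x^{1-\varepsilon}$ for $x\in(0,1]$ and H\"older's inequality with the weight $(1+|v|)^{-1}$ gives, for any $\varepsilon\in(0,1/4)$,
\[
\int_{\{f^n(t)\le 1\}} \bigl(-f^n(t,v)\log f^n(t,v)\bigr)\, dv \le C_{\varepsilon}\,\|f^n(t)\|_{L^1_1}^{1-\varepsilon} \le C_{T_1,\varepsilon}.
\]
Combined with $H(f^n(t))\le H(f_0)$ this yields a uniform bound on $\int (f^n\log f^n)_+(t,v)\, dv$, and since $\log(1+x)\le \log 2+(\log x)_+$, it delivers a uniform $L\log L$ bound on $\{f^n(t,\cdot)\}_n$ for $t\in[0,T_1]$.

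To conclude, I pass to the limit. Uniqueness in $C([0,\infty),\cK^1)$ together with the equicontinuity of $\{\psi_n\}$ proved in Theorem~\ref{remark-ck} forces $\psi_n(t,\cdot)\to\psi(t,\cdot)$ pointwise for each $t$, so $F_t^{(n)}\to F_t$ in $\cS'(\RR^3)$. The uniform first moment bound provides tightness while the uniform $L\log L$ bound provides uniform integrability, so the Dunford--Pettis theorem makes $\{f^n(t)\}$ relatively weakly compact in $L^1(\RR^3)$. A weakly convergent subsequence must agree distributionally with $F_t$, so $F_t$ has a density $f(t,\cdot)\in L^1(\RR^3)$, and the bound~\eqref{boundedness-234} transfers to $f(t)$ by the lower semicontinuity of the convex functional $g\mapsto\int g\log(1+g)\, dv$. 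The main technical obstacle is precisely this identification step: one must simultaneously exclude loss of mass at infinity (handled by tightness, which rests on the moment propagation available only in the mild singularity range $s<1/2$) and concentration into atoms (handled by uniform integrability from the $L\log L$ bound).
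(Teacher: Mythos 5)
Your overall architecture matches the paper's: Proposition \ref{propagation-moments} for the first moment, an $H$-theorem for an approximating problem, control of the negative part of the entropy by the first moment, Dunford--Pettis plus identification of the limit through the Fourier picture, and weak lower semicontinuity of $g\mapsto\int g\log(1+g)\,dv$. The genuine difference is \emph{which} object you regularize. You keep the initial datum $f_0$ and cut off the kernel ($b_n=\min\{b,n\}$), invoking ``the classical $H$-theorem for bounded kernels'' to get $H(f^n(t))\le H(f_0)$. The paper instead keeps the singular kernel and truncates the datum, setting $f_{0,m}=c_m\chi(v/m)f_0\in L^1_2\cap L\log L$, so that Theorem 1 of Villani's 1998 paper (cited as \cite{villani}) applies verbatim and yields \eqref{H-theorem}; the price is the extra work of showing $\|\hat f_{0,m}-\hat f_0\|_\beta\to 0$ for some $\beta\in(2s,1)$ and then using the stability estimate \eqref{stability-0} to identify the limit. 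Your route avoids that convergence-of-initial-data argument entirely, and your identification of $\lim\psi_n$ with $\psi$ is exactly the mechanism already used in Proposition \ref{energy-con}, so that part is fine. Your bound on the negative entropy part via $-x\log x\le\varepsilon^{-1}x^{1-\varepsilon}$ and H\"older (with $\varepsilon<1/4$ so that $\langle v\rangle^{-(1-\varepsilon)/\varepsilon}$ is integrable on $\RR^3$) is a correct variant of the paper's $x\log x\ge -y+x\log y$ with $y=e^{-\langle v\rangle}$.

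The one step you should not treat as free is precisely $H(f^n(t))\le H(f_0)$ for data that are only in $L^1_1\cap L\log L$. The standard, citable $H$-theorems for the cutoff equation (Arkeryd, and also Villani's Theorem 1 specialized to bounded kernels) are stated for $f_0\in L^1_2\cap L\log L$: finite energy is used there to make $\int f|\log f|\,dv$ and the entropy dissipation manipulations legitimate. This is exactly the obstruction the paper's datum-truncation is designed to circumvent. Your claim is very likely true for Maxwellian molecules with cutoff (e.g.\ via the Wild sum together with $H(Q^+(g,g)/\mu)\le H(g)$ and convexity, which needs only enough moment to bound the negative part of the entropy --- and you have the first moment), but as written it is an assertion, not a proof, and it is the load-bearing assertion of your argument. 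Either supply that argument, or do what the paper does and additionally truncate the datum so that the approximate problems sit inside the hypotheses of a quotable $H$-theorem.
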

\begin{proof}
By the previous proposition, there exists a $C>0$ such that 
\begin{align}
\int \la v \ra f(t,v) dv \le C e^{Ct} \int\la v \ra f_0(v) dv. 
\end{align}
For $m \in \NN$ and $\chi \in C_0^\infty$ in the proof of Proposition \ref{energy-con}, put $f_{0,m}(v) = c_m \chi\left(\frac{v}{m}\right) f_0(v)$ with  $c_m = 1/ \int \chi\left(\frac{v}{m}\right)f_0(v)dv$.
Since 
$f_0(v) \in L^1_1(\RR^3)$, there exists $M \in \NN$
such that  $1 \le  c_m \le 2$ for all $ m \ge M$.  Consider the  solutions $f_m(t,v)$ for the initial data $f_{0,m}(v)$ for $m \ge M$. Since $f_{0,m} \in L^1_2(\RR^3) \cap L \log L(\RR^3)$, we have 
by Theorem 1 of \cite{villani} that 
\begin{align}\label{H-theorem}
\int f_m(t,v) \log f_m(t,v) dv  + \int_0^t D(f_m)(s) ds =  \int f_{0,m}(v) \log f_{0,m}(v) dv\,, 
\end{align}
where $D(f)(t)$ is defined by 
\[
D(f) (t) = \frac{1}{4}\iiint_{\RR^3 \times \RR^3 \times \SS^2} b(f'_*f' -f_* f) \log \frac{f'_*f'}{ f_* f}dvdv_*d\sigma \ge 0
\]
with $f = f(t,\cdot)$. Therefore, writing $\chi_m = \chi(v/m)$, we have 
\begin{align*}
&\int f_m(t,v) \log f_m(t,v) dv  \leq \int f_{0,m}(v) \log f_{0,m}(v) dv\\
&\le 2\log 2 \int f_0 dv  + 2  \int \chi_m f_0 \log^+  \chi_m f_0  dv.
\end{align*}
Note that $x \log x \ge -y + x \log y$ $(x \ge 0, y >0)$. Hence, 
\begin{align*}
0 \ge f_m(t,v) \log^-f_{m}(t,v) \ge - e^{-\la v \ra} - \la v \ra   f_{m}(t,v).
\end{align*}
Thanks to Proposition \ref{propagation-moments} again, 
\[
\int f_m(t,v)\la v \ra  dv \le C e^{Ct} \int f_{0,m}(v)\la v \ra  dv \le  2 C e^{Ct} \int \la v \ra f_{0}(v) dv.
\]
Therefore 
\begin{align*}
&\int f_m(t,v) \log^+ f_m(t,v) dv  
\le 2 \log 2 \int f_0 dv \\
& \quad + 2 \int f_{0}(v) \log^+  f_{0}(v) dv  +\int e^{-\la v \ra} dv  +  C e^{Ct} \int  \la v \ra f_{0}(v) dv. 
\end{align*}
Thus, for any $ T>0$ there exists $C_T >0$ such that
\begin{align*}
\sup_{0 < t \le T}\Big( \int f_m(t,v)\la v \ra  dv+ \int f_m(t,v) \log (1+ f_m(t,v) )dv \Big)  \le C_T\,,
\end{align*}
which concludes the weak compactness of $\{ f_m \}$ in $L^1(\RR^3)$, by means of Dunford-Pettis criterion.

Note that $\hat f_{0,m}, \hat f_0 \in \cK^1$ uniformly with respect to $m$  where the condition $\int v_j f_{0,m}(v) dv =0$ is not required. 
In fact, 
\begin{align}\label{uni-norm-1}
\|\hat f_{0,m} -1\|_1 &\le  \int  \sup_{|\xi| \ne 0} \frac{|e^{-i \frac{v}{|v|} \cdot (|v|\xi)}- 1|}{\left |(|v|\xi )\right|} |v| f_{0,m}(v)dv \notag \\
&\le 2 \int |v| f_{0,m}(v)dv \le 4 \int |v|f_0(v)dv.
\end{align}
Take a constant $\beta > 0$ satisfying $2s < \beta <1$. Then,  for any $\delta >0$ we have 
\begin{align*}
\sup_{0<|\xi| < \delta} \frac{|\hat f_{0,m} (\xi) - \hat f_{0}(\xi)|}{|\xi|^\beta} &\leq  \delta^{1-\beta} \Big( \sup_{0<|\xi| < \delta} \frac{|\hat f_{0,m} (\xi) - 1|}{|\xi|}
+ \sup_{0< |\xi| < \delta} \frac{|1 - \hat f_{0}(\xi)|}{|\xi|}\Big)\\
&\le 6 \delta^{1-\beta} \int |v|f_0(v)dv\,.
\end{align*}
Note that for any  $R >0$ large enough, we have
\begin{align*}
\sup_{|\xi| >  R} \frac{|\hat f_{0,m} (\xi) - \hat f_{0}(\xi)|}{|\xi|^\beta} \le2 R^{-\beta} \,.
\end{align*}
In view of \eqref{uni-norm-1},  it follows from Lemma 2.1 of \cite{morimoto-12}  that $\{\hat f_{0,m}\}$ is uniformly equi-continuous on the compact set $\{\delta \le |\xi| \le R\}$.
By the Ascoli-Arzel\'a theorem, $\{\hat f_{0,m}\}$ is a Cauchy sequence in $\|\cdot \|_\beta$, taking a subsequence if necessary.  Since $f_{0,m}(v) \rightarrow f_0(v)$ in $\cS'(\RR^3)$ as $m$ tends to  $\infty$,
we concludes $\|\hat f_{0,m}-  \hat f_{0}\|_\beta \rightarrow 0$.

It follows from \eqref{stability-0} that
\[
\|\hat f_m(t,\cdot) - \hat f(t,\cdot)\|_\beta \le e^{\lambda_\beta t}\|\hat f_{0,m} - \hat f_0\|_\beta,
\]
which shows $\hat f_m(t,\xi) \rightarrow \hat f(t,\xi)$ everywhere in $\xi$ for any fixed $t>0$.
Since $|\hat f_m(t,\xi)|, |\hat f(t,\xi)| \le 1$, we know 
$f_m(t,v) \rightarrow f(t, v)$ in $\cS'(\RR^3)$.

If we recall the weak compactness of $\{f_m(t,\cdot)\}$ in $L^1(\RR^3)$, then we obtain 
\[
f_m(t,\cdot)  \rightarrow f(t,\cdot) \enskip \mbox{weekly in } \enskip L^1(\RR^3).
\]
Since $\lambda \log(1+ \lambda)$ is convex, for $0<t\le T$, we have 
\[
\int f(t,v) \log (1+f(t,v)) dv \le \lim \inf \int f_m(t,v) \log (1+f_m(t,v))dv  \le C_T.
\]
And this completes the proof of the proposition.
\end{proof}

\bigskip

\noindent
{\bf Acknowledgements:}
The authors would like to thank Professor Villani for the stimulating
discussion on this topic.


\end{document}